 \newtheorem{theorem}{Theorem}[section]
 \newtheorem{lemma}[theorem]{Lemma}
\newtheorem{definition}[theorem]{Definition}
\newtheorem{remark}[theorem]{Remark}
\newtheorem{fact*}{Fact}
\DeclareMathOperator{\RE}{Re}
\newcommand\dd{\mathrm d}
\newcommand{\M}{\mathcal{M}}
\newcommand{\N}{\mathcal{N}}
\newcommand{\T}{\mathbb{T}}
\newcommand{\D}{\mathbb{D}}
\newcommand{\C}{\mathbb{C}}
\newcommand{\schur}{\mathcal{S}_2}
\newcommand{\Lop}{\mathcal{L}}
\newcommand{\cc}[1]{\overline{#1}}
\newcommand{\abs}[1]{\left\vert#1\right\vert}
\newcommand{\norm}[1]{\left\Vert#1\right\Vert}
\newcommand{\nt}{\stackrel{\mathrm {nt}}{\to}}
\newcommand{\ip}[2]{\left\langle #1, #2 \right\rangle}
\newcommand{\ad}{^\ast}
\newcommand{\inv}{^{-1}}
\newcommand{\til}{\raise.17ex\hbox{$\scriptstyle\mathtt{\sim}$}}
\newcommand{\vp}{\varphi}
\newcommand{\ph}{\varphi}
\newcommand\la{\lambda}
\newcommand\beq{\begin{equation}}
\newcommand\eeq{\end{equation}}
\newcommand{\vectwo}[2]
{
   \begin{pmatrix} #1 \\ #2 \end{pmatrix}
}
\newcommand\bbm{\begin{bmatrix}}
\newcommand\ebm{\end{bmatrix}}
\newcommand\bpm{\begin{pmatrix}}
\newcommand\epm{\end{pmatrix}}
\numberwithin{equation}{section}
\newlength{\Mheight}
\newlength{\cwidth}
\newcommand{\mc}{\settoheight{\Mheight}{M}\settowidth{\cwidth}{c}M\parbox[b][\Mheight][t]{\cwidth}{c}}
\title[Analytic functions at boundary singularities]{Analytic functions on the bidisk at boundary singularities via Hilbert space methods}
\author{R. Tully-Doyle}
\address{Ryan Tully-Doyle, Department of Mathematics, Hampton University, Hampton VA, 23668}
\email{ryan.tullydoyle@hamptonu.edu}
\date{\today}
\subjclass{32A30, 32S05, 30E20,  47A56, 47A57}
\begin{document}

\begin{abstract}
We investigate the behavior of a generalized Hilbert space model of a function in the Schur class of the bidisk at singular boundary points that satisfy a growth condition. We examine the relationship between the boundary behavior of Schur functions and the geometry of corresponding generalized Hilbert space models. We describe a geometric condition on an associated operator that classifies the behavior of the directional derivative of the underlying Schur function at a carapoint.
\end{abstract}

\maketitle

The \emph{Schur class} in one variable, denoted by $\mathcal S$, is the set of analytic functions $\ph \in \mathcal S$ that map the complex unit disk $\D$ into itself. Beginning in the early 20th century, analysts studied the Schur class and conformally related families of functions. A classical theorem due to C. Carath\'edory and R. Julia from this period relates the differentiability of Schur functions at boundary points to a regularity condition at the boundary \cite{ju20, car29}. 

In this paper, we consider the two variable \emph{Schur-Agler class}, denoted by $\schur$. A function $\ph$ is in $\schur$ if $\ph$ is an analytic map of the bidisk $\D^2$ into $\D$. In two variables, the situation is complicated by the existence of nontrivial singular sets at the distinguished boundary of the bidisk, the torus $\T^2$, even for rational functions. Despite this obstruction, it is possible to formulate a version of the classical theorem in several variables (see, e.g. \cite{ab98, bgr90, rud80, wlo87}).  In particular, in a paper of 2010, Agler, {\mc}Carthy, and Young generalized the classical theorem to two variables by way of an operator theoretic construct called a Hilbert space model. Beyond giving a natural generalization of the one variable case, Agler, {\mc}Carthy and Young's theorem characterized the boundary behavior of two variable Schur functions in terms of the objects in the Hilbert space model. 

In \cite{aty12}, the author, with J. Agler and N.J. Young, developed a generalized Hilbert space model particularly suited to the study of the behavior of rational functions at boundary singularities, at the cost of losing the ability to use operator theoretic conditions at certain boundary singularities to detect the differential structure in the function being modeled. 

We first develop the notion of \emph{singular} and \emph{regular} generalized models by looking at a geometric condition on the model Hilbert space. With these definitions, our main results in this paper, Theorem \ref{forward2} and Theorem \ref{diffmain}, characterize the differential structure of a Schur function at a singular boundary point in terms of generalized models, recovering the spirit of the two variable Julia-Carath\'eodory Theorem in \cite{amy10a}.

The central object in generalized Hilbert space models is an operator-valued rational inner function in two variables. In \cite{knese14}, G. Knese describes boundary behavior of rational inner functions from the bidisk into the disk. In \cite{pascoe16}, J. E. Pascoe develops a method for constructing rational inner functions of a given level of regularity at the boundary. We anticipate that this work will lead to further extension of the generalized Hilbert space model approach to a larger set of boundary singularities.

The author would like to thank N.J. Young for support and for providing a key insight \cite{young12}. 

\section{Preliminaries}
\subsection{Carapoints}

 For a function $\ph \in \schur$, points that satisfy the following \emph{Carath\'eodory condition} are called carapoints \cite{aty12}. 

\begin{definition}\label{cara1}
Let $\ph \in \schur$. A point $\tau \in \T^2$ is a carapoint for $\ph$ if there exists a sequence $\{\la_n\} \subset \D^2$ tending to $\tau$ such that 
\beq
\frac{1 - \abs{\vp(\la)}}{1 - \norm{\la}_\infty} \text{ is bounded.}
\eeq
\end{definition}

In the bidisk, a set $S$ approaches $\tau$ nontangentially if there exists a positive constant $c$ so that for all $\la \in S$,
\[
\norm{\tau - \la}_\infty \leq c(1 - \norm{\la}_\infty),
\]
where $\norm{\la}_\infty = \max\{\abs{\la^1}, \abs{\la^2}\}$. A sequence $\{\la_n\}$ is said to approach $\tau$ nontangentially, that is $\la_n \nt \tau$, if $\{\la_n\} \subset S$ for some set $S \subset \D^2$ that approaches $\tau$ non-tangentially.

\subsection{Models}

A primary tool used to study the boundary behavior of functions in $\schur$ is a \emph{Hilbert space model}.
\begin{definition}\label{model}
Let $\vp \in \schur$. A pair $(\M, u)$ is a \emph{model} for $\vp$ if $\M = \M_1 \oplus \M_2$ is an orthogonally decomposed separable Hilbert space and $u:\D^2 \to \M$ is an analytic map such that 
\beq \label{modeleq}
1 - \cc{\vp(\mu)}\vp(\la) = \ip{(1 - \cc\mu^1\la^1)u_\la}{u_\mu}_{\M_1} + \ip{(1 - \cc\mu^2\la^2)u_\la}{u_\mu}_{\M^2}
\eeq
holds for every $\la, \mu \in \D^2$, where $u_\la = u(\la)$. Abusing notation slightly within the inner product on $\M$, if we let $\la$ inside the inner product represent the operator on $\M$ given by
	$$\la = \la^1 P_{\M_1} + \la^2 P_{\M_2},$$
then \eqref{modeleq} can be written in compressed notation as
\beq \label{model2}
1 - \cc{\vp(\mu)}\vp(\la) = \ip{(1 - \mu\ad\la)u_\la}{u_\mu}_\M.
\eeq
\end{definition}
Every function in $\schur$ has a model \cite{ag90, ampi}. 

Hilbert space models and realizations encode function theoretic data about Schur-Alger functions into the structure of a Hilbert space and associated maps. 

\begin{definition} \label{modelpoints}
For a given function $\vp \in \schur$, a point $\tau \in \D^d$ is a $B$-point of the model if $u$ is bounded on every subset of $\D^d$ that approaches $\tau$ nontangentially. The point $\tau$ is a $C$-point of the model if, for every subset $S$ of $D^d$ that approaches $\tau$ nontangentially, $u$ extends continuously to $S \cup \{\tau\}$ (with respect to the norm topology on $\M$).
\end{definition}

In \cite{amy10a}, Agler, {\mc}Carthy, and Young used Hilbert space model techniques to generalize the classical Carath\'eodory-Julia Theorem to two variables in terms of the properties of a model at a boundary point. The following theorems represent a qualitative version of those results.

\begin{theorem}[Agler, {\mc}Carthy, Young] \label{bpoint}
Let $\ph \in \schur$, and $\tau \in \T^2$. The following are equivalent:
 \begin{enumerate}
 \item $\tau$ is a carapoint for $\ph$;
 \item there exists a model $(\M, u)$ of $\ph$ such that $\tau$ is a $B$-point;
 \item for every model $(\M, u)$ of $\ph$, $\tau$ is a $B$-point.
\end{enumerate}
\end{theorem}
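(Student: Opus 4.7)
The plan is to prove (3) $\Rightarrow$ (2) $\Rightarrow$ (1) $\Rightarrow$ (3). The first implication is immediate, since every function in $\schur$ admits a model \cite{ag90, ampi}, so the universal assertion forces the existential one. The remaining content consists of tying the Carath\'eodory quotient
$$J_\ph(\la) = \frac{1 - |\ph(\la)|}{1 - \|\la\|_\infty}$$
to the norm $\|u_\la\|_\M$ via the model identity \eqref{modeleq} evaluated at $\mu = \la$, namely
$$1 - |\ph(\la)|^2 = (1 - |\la^1|^2)\|P_{\M_1} u_\la\|^2 + (1 - |\la^2|^2)\|P_{\M_2} u_\la\|^2.$$

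For (2) $\Rightarrow$ (1), I would fix any sequence $\la_n \to \tau$ inside a nontangential approach region $S$. The hypothesis that $\tau$ is a $B$-point bounds $\|u_{\la_n}\|_\M$ uniformly by some constant $M$. The key geometric input is that on $S$, since $|\tau^k| = 1$, the reverse triangle inequality gives $1 - |\la^k| \leq |\tau^k - \la^k| \leq c(1 - \|\la\|_\infty)$ for $k = 1, 2$, and hence $1 - |\la^k|^2 \leq 2c(1 - \|\la\|_\infty)$. Substituting into the displayed identity and dividing by $1 + |\ph(\la)| \geq 1$ yields a uniform bound on $J_\ph(\la_n)$, so $\tau$ is a carapoint.

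The reverse direction (1) $\Rightarrow$ (3) uses the same identity but in the opposite way. For any model, the trivial inequality $1 - |\la^k|^2 \geq 1 - \|\la\|_\infty^2$ together with $\|P_{\M_1} u_\la\|^2 + \|P_{\M_2} u_\la\|^2 = \|u_\la\|^2$ gives
$$\|u_\la\|_\M^2 \leq \frac{1 - |\ph(\la)|^2}{1 - \|\la\|_\infty^2} \leq 2\,J_\ph(\la).$$
So boundedness of $u$ on every nontangential approach to $\tau$ reduces to boundedness of $J_\ph$ on every such approach.

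The main obstacle, then, is to upgrade the sequential boundedness of $J_\ph$ supplied by Definition \ref{cara1} to boundedness of $J_\ph$ on \emph{every} nontangential approach set at $\tau$. This is the two-variable analogue of Julia's classical lemma: finiteness of the Carath\'eodory quotient along a single sequence is already enough to force a horocyclic invariance property of $\ph$ at $\tau$ and, consequently, a uniform bound for $J_\ph$ throughout each nontangential cone. I would invoke this as the nontrivial Julia-type result established in \cite{amy10a}. Once that step is in hand, the two elementary model-identity estimates above close the circle, and the three conditions become equivalent.
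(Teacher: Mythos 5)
The paper itself offers no proof of Theorem \ref{bpoint}: it is quoted from \cite{amy10a}, so the only meaningful comparison is with the argument given there. Your elementary steps are all correct: (3)$\Rightarrow$(2) is immediate from the existence of models; the identity $1-\abs{\ph(\la)}^2=(1-\abs{\la^1}^2)\norm{P_{\M_1}u_\la}^2+(1-\abs{\la^2}^2)\norm{P_{\M_2}u_\la}^2$ together with $1-\abs{\la^k}\le\abs{\tau^k-\la^k}\le c(1-\norm{\la}_\infty)$ on a nontangential set does give (2)$\Rightarrow$(1); and the estimate $\norm{u_\la}^2\le(1-\abs{\ph(\la)}^2)/(1-\norm{\la}_\infty^2)\le 2J_\ph(\la)$ correctly reduces (1)$\Rightarrow$(3) to nontangential boundedness of the Carath\'eodory quotient.

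The gap is in how you obtain that boundedness. The Julia-type inequality you invoke from \cite{amy10a} is, in that paper, a \emph{consequence} of the very equivalence you are proving: there the implication (1)$\Rightarrow$(3) is established first, by Hilbert space methods, and Julia's inequality for $\ph$ is deduced afterwards from the nontangential boundedness of the model, so your citation inverts their logical order and the argument is circular as written. The model-theoretic proof runs roughly as follows: along the carapoint sequence $\la_n$ your displayed identity already forces $\norm{u_{\la_n}}^2\le 4C$ (since $1-\abs{\la_n^k}^2\ge 1-\norm{\la_n}_\infty$), so one extracts a weak cluster point $u^*$ of $(u_{\la_n})$ and a limit $\omega=\lim\ph(\la_n)$ with $\abs{\omega}=1$; passing to the limit in $1-\cc{\ph(\la_n)}\ph(\la)=\ip{(1-\la_n\ad\la)u_\la}{u_{\la_n}}$ and applying Cauchy--Schwarz together with the cone geometry yields $1-\abs{\ph(\la)}\le 2c^2\norm{u^*}^2(1-\norm{\la}_\infty)$ on each nontangential region, which is exactly the bound you need. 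To repair your outline without simply reproducing that argument, you must cite or prove a Julia lemma for the bidisk that is independent of models --- for instance via the Schwarz--Pick inequality for the Kobayashi distance of $\D^2$, $1-d(\ph(\la),\ph(\la_n))^2\ge\min_k\,(1-\abs{\la^k}^2)(1-\abs{\la_n^k}^2)/\abs{1-\cc{\la_n^k}\la^k}^2$, whose $n\to\infty$ limit retains the carapoint bound even when the sequence $\la_n$ is tangential (a point you should address explicitly, since Definition \ref{cara1} does not require nontangential approach); such model-free Julia lemmas are in \cite{ab98,wlo87}, both already cited in the paper. With either repair, your reduction closes into a complete proof.
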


\begin{theorem}[Agler, {\mc}Carthy, Young]\label{modelbpoint}
 If $\tau$ is a $B$-point for a model $(\M, u)$ of $\ph$, then the nontangential limit of $\ph$ at $\tau$ given by
	$$\ph(\tau) := \lim_{\la \nt \tau} \ph(\la)$$
exists.
\end{theorem}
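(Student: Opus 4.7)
The plan is to use the model identity \eqref{model2} twice: once on the diagonal $\mu = \la$ to show $|\vp(\la)| \to 1$ nontangentially, and once off-diagonal with two nontangential sequences to show $\cc{\vp(\mu_n)}\vp(\la_n) \to 1$. Together these give that $\vp(\la_n) - \vp(\mu_n) \to 0$ for any two nontangential sequences $\la_n, \mu_n \to \tau$, which forces the nontangential limit to exist.

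First I would observe that the operator $1 - \mu\ad\la$ appearing in \eqref{model2} is
\[
(1 - \cc{\mu^1}\la^1)P_{\M_1} + (1 - \cc{\mu^2}\la^2)P_{\M_2},
\]
and since $\M_1 \perp \M_2$, its norm equals $\max\bigl(|1 - \cc{\mu^1}\la^1|,\ |1 - \cc{\mu^2}\la^2|\bigr)$. As $\la, \mu \to \tau \in \T^2$, each entry tends to $1 - |\tau^i|^2 = 0$, so $\|1 - \mu\ad\la\| \to 0$. Setting $\mu = \la$ in \eqref{model2} yields
\[
1 - |\vp(\la)|^2 = \ip{(1 - \la\ad\la)u_\la}{u_\la}_\M,
\]
and the $B$-point hypothesis says $\{u_\la : \la \in S\}$ is bounded for any nontangential set $S$, so the right side tends to $0$ and hence $|\vp(\la)| \to 1$ nontangentially.

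Next, given any two sequences $\la_n, \mu_n \nt \tau$, form $S = \{\la_n\} \cup \{\mu_n\}$; this is still a nontangential set (its nontangential constant is the max of the two), so $\{u_{\la_n}\}$ and $\{u_{\mu_n}\}$ are both bounded in $\M$. Applying \eqref{model2} off-diagonally gives
\[
|1 - \cc{\vp(\mu_n)}\vp(\la_n)| \leq \|1 - \mu_n\ad\la_n\|\,\|u_{\la_n}\|\,\|u_{\mu_n}\| \To 0,
\]
so $\cc{\vp(\mu_n)}\vp(\la_n) \to 1$. Expanding
\[
|\vp(\la_n) - \vp(\mu_n)|^2 = |\vp(\la_n)|^2 + |\vp(\mu_n)|^2 - 2\RE\bigl(\cc{\vp(\mu_n)}\vp(\la_n)\bigr),
\]
the right side tends to $1 + 1 - 2 = 0$. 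Thus $\vp$ is Cauchy along every nontangential approach, and any two nontangential sequences give the same limit, which we call $\vp(\tau)$.

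The only real obstacle is the operator-valued bookkeeping in the first step; once one sees that $1 - \mu\ad\la$ is block-diagonal on $\M_1 \oplus \M_2$ and therefore has norm controlled by the coordinatewise quantities $|1 - \cc{\mu^i}\la^i|$, the proof reduces to a bounded-times-vanishing estimate in the inner product, exactly mirroring the standard one-variable Julia--Carath\'eodory argument.
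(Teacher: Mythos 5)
Your proof is correct, and there is nothing in the paper to compare it against: the paper states this theorem without proof, citing it as a result of Agler, {\mc}Carthy, and Young from \cite{amy10a}. Your argument is a clean, self-contained reconstruction of the standard model-theoretic proof: the diagonal case $\mu=\la$ of \eqref{model2} plus nontangential boundedness of $u$ gives $|\vp(\la)|\to 1$, the off-diagonal case gives $\cc{\vp(\mu)}\vp(\la)\to 1$, and the expansion of $|\vp(\la)-\vp(\mu)|^2$ combines the two. All the supporting observations check out: the operator $1-\mu\ad\la$ is block diagonal on $\M_1\oplus\M_2$ with norm $\max_i|1-\cc{\mu^i}\la^i|$, each entry tends to $0$ because $|\tau^i|=1$, and the union of two nontangential sets is again nontangential (with constant the maximum of the two). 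One point worth making explicit: to get the Cauchy property along a single nontangential sequence you need the estimate for \emph{all} pairs $(\la_n,\la_m)$, not just matched indices; this is covered because your bound
\[
|1-\cc{\vp(\mu)}\vp(\la)| \leq \norm{1-\mu\ad\la}\,\norm{u_\la}\,\norm{u_\mu}
\]
is uniform over pairs $\la,\mu$ in a fixed nontangential set $S$ (with $\norm{u}\leq M$ on $S$), and $\norm{1-\mu\ad\la}\leq \norm{\mu-\tau}_\infty+\norm{\tau-\la}_\infty$ tends to $0$ as both points approach $\tau$; with that noted, the limit exists along every nontangential set, and the union trick shows all such limits agree.
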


\begin{theorem}[Agler, {\mc}Carthy, Young]\label{modelcpoint}
 $\tau$ is a $C$-point for a model $(\M, u)$ of $\ph$ if and only if $\ph$ is nontangentially differentiable at $\tau$.
\end{theorem}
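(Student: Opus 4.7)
The plan is to prove each implication separately, treating the forward direction ($C$-point implies nontangential differentiability of $\ph$) as a relatively direct computation, and the backward direction (differentiability implies $C$-point) as the substantive half requiring strong convergence of $u_\la$ from weak convergence.

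For the forward direction, I would assume $\tau$ is a $C$-point, so $u_\la \to u_\tau$ in norm along any nontangential approach. Since $C$-points are $B$-points, Theorem \ref{modelbpoint} gives a nontangential limit $\ph(\tau)$, and since $\tau \in \T^2$ one has $|\ph(\tau)| = 1$. Passing to the limit $\mu \nt \tau$ in the model equation \eqref{model2} yields
\[
1 - \cc{\ph(\tau)}\ph(\la) \;=\; \ip{(1 - \tau\ad\la)u_\la}{u_\tau}_\M
\]
for all $\la \in \D^2$, and multiplying by $\ph(\tau)$ gives $\ph(\tau) - \ph(\la) = \ph(\tau)\ip{(1-\tau\ad\la)u_\la}{u_\tau}$. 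Writing $1 - \tau\ad\la = (1-\cc{\tau^1}\la^1)P_{\M_1} + (1-\cc{\tau^2}\la^2)P_{\M_2}$ and using norm-continuity of $u$ to replace $\ip{P_{\M_j}u_\la}{u_\tau}$ by $\|P_{\M_j}u_\tau\|^2 + o(1)$ nontangentially, I would extract a linear functional in $(\la - \tau)$ with error bounded by a product of an $O(\|\la-\tau\|)$ factor and a norm-convergent $o(1)$ factor, giving the $o(\|\la-\tau\|)$ remainder required for nontangential differentiability.

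For the backward direction, differentiability at $\tau$ implies $\tau$ is a carapoint, so by Theorem \ref{bpoint}, $\tau$ is a $B$-point and $u_\la$ remains bounded on every nontangential approach set. Let $\la_n \nt \tau$. By weak compactness, a subsequence satisfies $u_{\la_{n_k}} \rightharpoonup v$ for some $v \in \M$. The strategy has two parts:
\begin{enumerate}
\item \textbf{Uniqueness of the weak limit.} Taking $\mu = \la_{n_k}$ in \eqref{model2} with fixed $\la \in \D^2$ and passing to the limit shows that
\[
1 - \cc{\ph(\tau)}\ph(\la) = \ip{(1 - \tau\ad\la)u_\la}{v}_\M,
\]
so $\ip{\cdot}{v}$ is determined on the set $\{(1 - \tau\ad\la)u_\la : \la \in \D^2\}$, which spans $\M$ (up to the orthogonal complement, which can be discarded without loss). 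This pins down $v$ independently of the subsequence.
\item \textbf{Upgrade to norm convergence.} Since $u_{\la_n} \rightharpoonup v$, it suffices to show $\|u_{\la_n}\| \to \|v\|$. Combining the identity
\[
1 - |\ph(\la_n)|^2 \;=\; (1-|\la_n^1|^2)\|P_{\M_1}u_{\la_n}\|^2 + (1-|\la_n^2|^2)\|P_{\M_2}u_{\la_n}\|^2
\]
with the linearization $\ph(\la_n) = \ph(\tau) + a(\la_n^1 - \tau^1) + b(\la_n^2 - \tau^2) + o(\|\la_n - \tau\|)$ supplied by differentiability should determine the nontangential limits of $\|P_{\M_j}u_{\la_n}\|^2$, and hence the limit of $\|u_{\la_n}\|^2 = \|P_{\M_1}u_{\la_n}\|^2 + \|P_{\M_2}u_{\la_n}\|^2$, equal to $\|v\|^2$.
\end{enumerate}

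The main obstacle is step (2) of the backward direction: weak convergence together with $B$-point boundedness does not automatically give norm convergence, and one must exploit nontangential differentiability quantitatively. The ratios $(1-\cc{\tau^j}\la_n^j)/(1-\|\la_n\|_\infty)$ behave controllably only on nontangential approach, and matching the real-linear expansion of $1 - |\ph(\la_n)|^2$ with the weighted sum of projection norms requires handling the directional components separately. Once norm convergence is established along every nontangential sequence with the common limit $v$, a standard diagonal argument shows $u$ extends continuously on each nontangential approach set, completing the identification of $\tau$ as a $C$-point.
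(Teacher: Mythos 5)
A preliminary remark: this paper does not prove Theorem \ref{modelcpoint} at all; it is quoted as background from \cite{amy10a}, so your proposal can only be judged on its own merits and against the argument in that reference. Your forward direction is essentially correct and is the standard argument: since $C$-points are $B$-points, Theorem \ref{modelbpoint} gives $\ph(\tau)$, and passing to the limit $\mu \nt \tau$ in \eqref{model2} (legitimate because $u_\mu \to u_\tau$ in norm) yields $1 - \cc{\ph(\tau)}\ph(\la) = \ip{(1-\tau\ad\la)u_\la}{u_\tau}$; letting $\la \nt \tau$ here also gives $|\ph(\tau)|=1$, and writing $1-\cc{\tau^j}\la^j = -\cc{\tau^j}(\la^j-\tau^j)$ together with $\ip{P_{\M_j}u_\la}{u_\tau} = \norm{P_{\M_j}u_\tau}^2 + o(1)$ produces the linearization with $o(\norm{\la-\tau})$ error, as you say.

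The backward direction, however, has genuine gaps. First, your step (1) fails as argued: the closed span of $\{(1-\tau\ad\la)u_\la : \la \in \D^2\}$ need not be dense in $\M$, and its orthogonal complement cannot be ``discarded without loss,'' because the component of $u_\la$ in that complement is exactly what is at issue (it still enters the model equation through $(1-\mu\ad\la)u_\la$ for $\mu \neq \tau$, and the $C$-point property concerns $u_\la$ itself). The paper's own example makes this concrete: for $\ph_y$ with the model of Lemma \ref{scalarphi} one computes
\begin{equation*}
(1-\tau\ad\la)u_{y,\la} \;=\; \frac{(1-\cc\tau^1\la^1)(1-\cc\tau^2\la^2)}{1-\cc\tau^1\la^1(1-y)-\cc\tau^2\la^2 y}\,\bigl(\sqrt{y},\,\sqrt{1-y}\bigr)^T,
\end{equation*}
so the span is a one-dimensional subspace of $\C^2$, while the nontangential limit points of $u_{y,\la}$ are genuinely non-unique and differ precisely in the orthogonal direction. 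Since your step (1) never invokes differentiability, and without differentiability the uniqueness claim is false (as this example shows), differentiability must enter the uniqueness argument itself --- and you have not shown how. Second, step (2), which you correctly flag as the main obstacle, is where the theorem actually lives and is not carried out: the identity for $1-|\ph(\la_n)|^2$ is a single scalar equation in the two unknowns $\norm{P_{\M_j}u_{\la_n}}^2$ with sequence-dependent coefficients, so it cannot by itself pin down their limits; and even granting $\norm{u_{\la_n}} \to L$, you must still prove $L = \norm{v}$, since weak convergence only gives $\norm{v} \le \liminf \norm{u_{\la_n}}$. Third, your opening claim that differentiability implies $\tau$ is a carapoint is false in general (take any $\ph$ analytic across $\tau$ with $|\ph(\tau)|<1$); the implication, and indeed the theorem itself, is only true under the standing hypothesis --- explicit in \cite{amy10a} but suppressed in this qualitative restatement --- that $\tau$ is a carapoint/$B$-point. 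For comparison, the proof in \cite{amy10a} takes a different route: restricting to analytic disks through $\tau$ and applying one-variable Julia--Carath\'eodory theory gives norm limits $u_\delta = \lim_{t\to 0^+} u_{\tau+t\delta}$ along each direction $\delta$; the directional derivative is then a degree $(1,1)$ rational function of $\delta$ (compare Lemma \ref{dderiv}), and linearity of that function --- i.e.\ differentiability --- forces all the $u_\delta$ to coincide and the convergence to be uniform over nontangential regions, which is the $C$-point property.
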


That is, boundedness and continuity of the model function $u_\la$ at a boundary point characterizes the boundary behavior of the Schur function at that point. More can be said about the differential structure of functions at carapoints (the subject of \cite{amy10a}), which will be discussed in the following sections.

\section{Generalized models and directional derivatives}

In \cite{aty12}, the author with J. Agler and N. Young developed a generalized model for functions in $\schur$ with a singular carapoint $\tau \in \T^2$, where the operator $\la$ in \eqref{modeleq} is replaced by a contractive operator-valued map $I$ defined in terms of a positive contraction on a Hilbert space. In the case that $\ph$ has a singular carapoint at $\tau$, $I_Y$ models the behavior of the singularity. We first introduce a natural generalization of the Carath\'eodory condition in Definition \ref{cara1}.

\begin{definition}\label{cara2}
Let $I$ be a contractive operator-valued map on $\D^2$. Then $\tau \in \T^2$ is a carapoint for $I$ if there exists a sequence $\{\la_n\} \subset \D^2$ tending to $\tau$ such that 
$$ \liminf_{\la \to \tau} \frac{1 - \norm{I(\la)}}{1 - \norm{\la}_\infty} \text{ is bounded.}$$
\end{definition}  

The following Lemma is proved in \cite[Theorem 3.6]{aty12}.

\begin{lemma}\label{iprop}
Let $Y$ be a positive contraction on a Hilbert space $\M$ and let $\tau \in \T^2$. Define an operator-valued, degree $(1,1)$ rational map $I_Y(\la)$ from $\C^2 \to \Lop (\M)$ by
\beq \label{olddefi}
 I_Y(\la) = \frac{\cc\tau^1\la^1 Y + \cc\tau^2\la^2 (1 - Y) - \cc\tau^1\cc\tau^2\la^1\la^2}{1 - \cc\tau^1\la^1(1 - Y) - \cc\tau^2\la^2 Y}.
\eeq

Then $I_Y$ is contractive and analytic on $\D^2$, $\tau$ is a singular carapoint for $I_Y$ (in the sense of Definition \ref{cara2}), and $I_Y(\tau) = 1_\M$.
\end{lemma}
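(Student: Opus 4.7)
Set $a := \cc\tau^1\la^1$, $b := \cc\tau^2\la^2$, so $|a|, |b| \le \|\la\|_\infty < 1$ for $\la \in \D^2$. Using $1 = (1-Y)+Y$, the denominator rewrites as
\[
D(\la) := 1 - a(1-Y) - bY = (1-a)(1-Y) + (1-b)Y.
\]
Since $0 \le Y \le 1$, continuous functional calculus in $Y$ shows that $a(1-Y) + bY$ has spectrum on the closed segment from $a$ to $b$, which lies in $\D$. Hence $D(\la)$ is invertible with $\|D(\la)^{-1}\| \le (1-\|\la\|_\infty)^{-1}$, so $I_Y$ is rational and analytic on $\D^2$.

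\textbf{Contractivity.} The numerator similarly rewrites as
\[
N(\la) := aY + b(1-Y) - ab = b(1-a)(1-Y) + a(1-b)Y,
\]
which immediately gives the crucial identity
\[
D(\la) - N(\la) = (1-a)(1-b)\cdot 1_\M,
\]
a \emph{scalar} multiple of the identity. Substituting into the polarization
\[
D^*D - N^*N = \tfrac12\bigl[(D-N)^*(D+N) + (D+N)^*(D-N)\bigr]
\]
yields $D^*D - N^*N = \RE\{(1-\cc a)(1-\cc b)(D+N)\}$. Using $D+N = (1-a)(1+b)(1-Y) + (1+a)(1-b)Y$ together with $\RE[(1-\cc z)(1+z)] = 1-|z|^2$, this simplifies to
\[
D^*D - N^*N = (1-|b|^2)|1-a|^2\,(1-Y) + (1-|a|^2)|1-b|^2\,Y \ge 0.
\]
Therefore $I_Y(\la)^*I_Y(\la) = D^{-*}N^*N D^{-1} \le 1_\M$, so $\|I_Y(\la)\| \le 1$.

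\textbf{Carapoint and value at $\tau$.} Let $z_i := 1-\cc\tau^i\la^i$. The scalar identity $D-N = z_1 z_2 \cdot 1_\M$ yields
\[
1_\M - I_Y(\la) = z_1 z_2 \, D(\la)^{-1}.
\]
On the radial slice $\la = t\tau$, $z_1 = z_2 = 1-t$ and $D(t\tau) = (1-t)\cdot 1_\M$, giving $I_Y(t\tau) = t\cdot 1_\M$; in particular $(1-\|I_Y(t\tau)\|)/(1-\|t\tau\|_\infty) = 1$, which verifies Definition \ref{cara2}. For the nontangential limit at $\tau$, functional calculus bounds $\|D(\la)^{-1}\| \le [\min(\RE z_1, \RE z_2)]^{-1}$, and the nontangential cone condition translates to $\max(|z_1|,|z_2|) \le C\min(\RE z_1, \RE z_2)$; hence $\|1_\M - I_Y(\la)\| \le C\min(|z_1|, |z_2|) \to 0$ as $\la \nt \tau$.

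The hard step is the positivity of $D^*D - N^*N$, which is \emph{a priori} quadratic in $Y$ and has no evident sign. The structural fact that $D-N$ is a scalar multiple of the identity is exactly what allows polarization to collapse this quadratic expression into a transparently non-negative linear combination of $Y$ and $1-Y$; without this observation the operator inequality is opaque.
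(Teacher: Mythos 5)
Your proof is correct, and it takes a genuinely different route from the paper's: the paper in fact gives no proof of this lemma at all, deferring to Theorem 3.6 of \cite{aty12}, and the route its own machinery suggests (Lemmas \ref{iyint} and \ref{scalarphi}) is spectral --- write $I_Y(\la) = \int \ph_y(\la)\,\dd E(y)$ by functional calculus in $Y$, so that $\norm{I_Y(\la)} = \sup_{y \in \sigma(Y)}\abs{\ph_y(\la)}$, and then reduce contractivity, the carapoint condition, and the nontangential limit $I_Y(\la) \to 1_\M$ to uniform-in-$y$ estimates on the scalar inner functions $\ph_y$, essentially the estimates that appear later in the proof of Lemma \ref{scalarphi}. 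You instead argue directly at the operator level: the observation that $D - N = (1-a)(1-b)1_\M$ is a scalar multiple of the identity, combined with polarization, collapses the a priori quadratic expression $D\ad D - N\ad N$ into a manifestly positive linear combination of $Y$ and $1-Y$; congruence by $D^{-1}$ gives contractivity, and the same scalar identity gives $1_\M - I_Y(\la) = z_1 z_2 D(\la)^{-1}$, from which both the radial computation and the nontangential limit follow. What your approach buys is self-containedness (it never invokes the scalar case or the spectral theorem); what the spectral approach buys is transparency, since every claim becomes a statement about the one-parameter family $\ph_y$. In fact the two are readings of the same identity: conjugating your positivity formula by $D(\la)^{-1}$ reproduces exactly the operator analogue of the model formula \eqref{imodel}. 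Two small points to tighten. First, your norm bounds $\norm{D(\la)^{-1}} \le (1-\norm{\la}_\infty)^{-1}$ and $\norm{D(\la)^{-1}} \le [\min(\RE z_1, \RE z_2)]^{-1}$ are valid, but only because $D(\la)$ is a function of the self-adjoint $Y$ and hence normal, so that $\norm{D(\la)^{-1}}$ equals the reciprocal of the distance from $0$ to $\sigma(D(\la))$; this should be said explicitly. Second, the adjective \emph{singular}, which the paper never formally defines for operator-valued maps, is most naturally read as the vanishing of the denominator at $\tau$; this is immediate from your computation (your radial slice gives $D(t\tau) = (1-t)1_\M$, so $D(\tau) = 0$), and it is also why $I_Y(\tau) = 1_\M$ must be interpreted, as you correctly do, as a nontangential limit rather than as a value of the formula.
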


Note that a a generalized model reduces to a standard Hilbert space model in the case that the operator $Y$ is a projection. 

The utility of generalized models at carapoints arises from the existence of a model for which the model function $v$ extends continuously to $\tau$ on sets that approach $\tau$ nontangentially. 

\begin{theorem}[Agler, Tully-Doyle, Young] \label{genmodexists}
Let $\tau \in \T^2$ be a carapoint for $\vp \in \schur$. Then there exists a Hilbert space $\M$, a positive contraction $Y$ on $\M$, an analytic map $v:\D^2 \to \M$ such that for all $\la, \mu \in \D^2$,
	$$ 1 - \cc{\ph(\mu)}\ph(\la) = \ip{(1 - I(\mu)\ad I(\la))v_\la}{v_\mu}$$
and $\tau$ is a $C$-point for $(\M, v, I_Y)$.

\end{theorem}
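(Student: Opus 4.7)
The plan is to start from an ordinary Hilbert space model of $\ph$, whose existence is guaranteed by \cite{ag90, ampi}, and manipulate it algebraically so that the coordinate operator $\mu\ad\la$ on the right-hand side of \eqref{modeleq} is replaced by the operator-valued rational expression $I_Y(\mu)\ad I_Y(\la)$ supplied by Lemma \ref{iprop}. The freedom to choose $Y$ to be any positive contraction, rather than merely a projection, is precisely what will permit us to upgrade the B-point $\tau$ to a C-point of the resulting generalized model.

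First, let $(\N, u)$ be an ordinary Hilbert space model for $\ph$. By Theorem \ref{bpoint}, $\tau$ is a B-point for $(\N, u)$, so $u_\la$ is bounded on every nontangential approach set to $\tau$. Pass to a nontangentially converging sequence $\la_n \nt \tau$ along which $u_{\la_n}$ converges weakly to some candidate boundary vector $u_\tau \in \N$; in general this convergence is only weak, not strong, and the gap between the two modes of convergence is exactly the singular behavior at $\tau$ that the operator $Y$ is designed to record.

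Second, use the explicit form \eqref{olddefi} of $I_Y$ to expand
\[
1 - I_Y(\mu)\ad I_Y(\la) = (1 - \mu\ad\la) + R_Y(\mu, \la),
\]
where the remainder $R_Y(\mu, \la)$ is a rational operator expression whose weights involve $Y$ and $1 - Y$ together with the denominator factor $\bigl(1 - \cc\tau^1\la^1(1-Y) - \cc\tau^2\la^2 Y\bigr)\inv$ from $I_Y$. Substituting into \eqref{modeleq} and rearranging allows one to read off the required map $v_\la$ as a transform of $u_\la$ dressed by this denominator factor; choosing $Y$, and hence the enlargement $\M$ of $\N$, so that $R_Y$ absorbs precisely the portion of $u$ which fails to extend strongly continuously at $\tau$ produces the generalized model equation on all of $\D^2$. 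Analyticity of $v$ follows because the relevant denominator is invertible throughout $\D^2$ by Lemma \ref{iprop}.

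Finally, verify the C-point property via the key identity $I_Y(\tau) = 1_\M$ from Lemma \ref{iprop}. Specializing the generalized model equation to $\mu = \la$ and combining with the Julia-quotient bound at the carapoint yields
\[
\bigl\|(1 - I_Y(\la))v_\la\bigr\|^2 \longrightarrow 0 \quad \text{as } \la \nt \tau,
\]
and this, together with the nontangential boundary regularity of $I_Y$ at $\tau$ already established in Lemma \ref{iprop}, forces $v_\la$ itself to converge in norm along nontangential approach sets to a single limit $v_\tau$. The principal obstacle in the plan is the second step: identifying the correct positive contraction $Y$ so that $R_Y$ exactly cancels the strong-continuity defect of the original model, while keeping $v$ analytic and the generalized model equation valid throughout $\D^2$. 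Once $Y$ is isolated correctly, the C-point verification in the final step is a comparatively routine consequence of $I_Y(\tau) = 1_\M$.
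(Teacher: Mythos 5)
Two remarks before the substance: this theorem is quoted from \cite{aty12} and is not proved in the present paper, so there is no internal proof to compare against; your proposal must therefore be judged on whether it would constitute a complete proof on its own. It does not. Your first step (take a standard model $(\N,u)$, invoke Theorem \ref{bpoint} to get a $B$-point, extract a weak nontangential limit $u_\tau$) is indeed the natural starting point. But the entire content of the theorem is the existence of the space $\M$, the positive contraction $Y$, and the analytic map $v$ satisfying the generalized model equation \emph{identically on $\D^2$}; your second step simply postulates that $Y$ can be chosen so that the remainder $R_Y$ ``absorbs precisely the portion of $u$ which fails to extend strongly continuously at $\tau$,'' and you yourself flag this as the principal obstacle. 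That obstacle \emph{is} the theorem: nothing in the proposal shows such a $Y$ exists, that the resulting $v$ is analytic, or that the identity holds for all $\la,\mu$ rather than asymptotically near $\tau$. In \cite{aty12} this is done by an explicit construction starting from the standard model and its boundary data, and the $C$-point property is read off from the constructed $v$, not recovered afterwards by a soft limiting argument.

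The deeper problem is that your third step is a non sequitur, and the paper itself contains a counterexample to its logic. From \eqref{Idiff}, on any nontangential approach set one has $1 - I_Y(\la) \to 0$ in operator norm as $\la \nt \tau$, so $\norm{(1-I_Y(\la))v_\la} \to 0$ holds automatically for \emph{any} nontangentially bounded $v_\la$ and carries no information; it cannot force $v_\la$ to be norm-convergent (it does not even rule out $v_\la$ oscillating between two distinct limit vectors). Concretely, take $\ph_y$ with $0<y<1$ and its standard model $(\C^2,u_y)$ from Lemma \ref{scalarphi}, viewed via Lemma \ref{iyint} as the generalized model $(\C^2, u_y, I_P)$ with $P$ the projection onto the first coordinate. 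Then $I_P(\tau)=1$, $u_{y,\la}$ is nontangentially bounded, and $\tau$ is a carapoint with bounded Julia quotient by Lemma \ref{phiy} --- every ingredient your step three uses is present --- yet $\tau$ is \emph{not} a $C$-point of this model, since otherwise Theorem \ref{modelcpoint} would make $\ph_y$ nontangentially differentiable at $\tau$, contradicting Lemma \ref{phiy}. So norm continuity of the model function at $\tau$ can never be deduced from $I_Y(\tau)=1_\M$ together with boundedness and the carapoint condition; it has to emerge from the specific construction of $(\M,v,I_Y)$, which is exactly the step your proposal leaves open.
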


We begin by characterizing the directional derivative of a function $\ph$ in terms of the positive contraction $Y$. The following lemma appears in the proof of Theorem 4.1 of \cite{aty12}.

\begin{lemma} \label{dderiv}
If $\ph$ has a carapoint at $\tau \in \T^2$ then there exist a Hilbert space $\M$, a positive contraction $Y$ on $\M$ and a vector $v_\tau \in \M$ such that the directional derivative of $\ph$ for a direction $\delta$ pointing into the bidisc at $\tau$ is given by the formula
\[
D_\delta \ph(\tau) = \ip{\frac{\cc\tau^1\cc\tau^2\delta^1\delta^2}{\cc\tau_1\delta^1(1-Y) + \cc\tau^2\delta^2 Y}v_\tau}{v_\tau}.
\]
\end{lemma}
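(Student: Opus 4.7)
The plan is to invoke Theorem \ref{genmodexists} to produce the Hilbert space $\M$, the positive contraction $Y$, and the analytic map $v: \D^2 \to \M$ for which $\tau$ is a $C$-point of $(\M, v, I_Y)$, and then to set $v_\tau = \lim_{\la \nt \tau} v_\la$, which exists by the $C$-point property. The desired formula will then follow by differentiating the model equation in the direction $\delta$ and passing to the boundary using the $C$-point convergence together with a Taylor expansion of $I_Y$ at $\tau$.

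More concretely, the model equation
$$ 1 - \cc{\ph(\mu)}\ph(\la) = \ip{(1 - I_Y(\mu)\ad I_Y(\la))v_\la}{v_\mu} $$
extends to $\mu \nt \tau$, and since $I_Y(\tau) = 1_\M$ by Lemma \ref{iprop} and $|\ph(\tau)| = 1$ by Theorem \ref{modelbpoint}, multiplying through by $\ph(\tau)$ yields
$$ \ph(\tau) - \ph(\la) = \ph(\tau)\ip{(1 - I_Y(\la))v_\la}{v_\tau}. $$
Specializing to $\la = \tau + t\delta$ with $t > 0$ small and dividing by $t$ reduces the problem to computing $\lim_{t \to 0^+}(1 - I_Y(\tau + t\delta))/t$, since the $C$-point property lets us replace $v_\la$ by $v_\tau$ in the limit.

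The core step is a direct Taylor expansion of the rational map in \eqref{olddefi}. Writing $\eps^j = t\cc\tau^j\delta^j$ and computing the numerator and denominator of $1 - I_Y(\tau + t\delta)$ using $\cc\tau^j\la^j = 1 + \eps^j$, the constant terms cancel and the linear terms reassemble into
$$ 1 - I_Y(\tau + t\delta) = \frac{-\,t\,\cc\tau^1\cc\tau^2\delta^1\delta^2}{\cc\tau^1\delta^1(1 - Y) + \cc\tau^2\delta^2 Y} + O(t^2). $$
Because $\delta$ points into $\D^2$ at $\tau$, we have $\RE\cc\tau^j\delta^j < 0$ for $j = 1, 2$, so the spectrum of $\cc\tau^1\delta^1(1-Y) + \cc\tau^2\delta^2 Y$ lies on the line segment joining $\cc\tau^1\delta^1$ and $\cc\tau^2\delta^2$ in the open left half-plane, hence the operator is boundedly invertible and the quotient is well defined on $\M$.

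Putting the pieces together produces the claimed formula, with the unimodular scalar $\ph(\tau)$ absorbed into $v_\tau$ by rescaling. The main obstacle is the algebraic expansion of $I_Y$ and the verification that the denominator operator is invertible for every inward-pointing $\delta$; the rest amounts to quoting Lemma \ref{iprop} and Theorems \ref{modelbpoint} and \ref{genmodexists} and applying the $C$-point convergence inside the inner product.
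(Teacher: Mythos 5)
Your argument is essentially the paper's own proof: the same appeal to Theorem \ref{genmodexists}, the same passage $\mu \nt \tau$ in the model equation followed by multiplication by $\ph(\tau)$, the same algebraic expansion of $I_Y$ along the ray $\la_t = \tau + t\delta$, and the same use of the $C$-point property to replace $v_{\la_t}$ by $v_\tau$ in the limit. Two details even improve on the paper: you verify that the denominator $\cc\tau^1\delta^1(1-Y) + \cc\tau^2\delta^2 Y$ is invertible (its spectrum lies in the segment joining $\cc\tau^1\delta^1$ and $\cc\tau^2\delta^2$, hence in the open left half-plane), a point the paper never addresses, and you state the inward-pointing condition correctly as $\RE\cc\tau^j\delta^j < 0$, where the paper writes $\RE\delta^j < 0$. (Incidentally, no $O(t^2)$ remainder is needed: the paper's computation \eqref{Idiff} shows the expansion is exact, because the numerator and denominator of $I_Y(\la_t) - 1_\M$ share a common factor of $t$.)

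The one claim that fails is your last one, that the unimodular scalar $\ph(\tau)$ can be ``absorbed into $v_\tau$ by rescaling'': for any scalar $c$,
\[
\ip{A(cv_\tau)}{cv_\tau} = \abs{c}^2\ip{Av_\tau}{v_\tau},
\]
so rescaling changes nothing when $\abs{c} = 1$, and a phase cannot be pushed into the vector of a quadratic form. What your argument actually establishes is
\[
D_\delta\ph(\tau) = \ph(\tau)\,\ip{\frac{\delta^1\delta^2}{\tau^2\delta^1(1-Y)+\tau^1\delta^2 Y}\,v_\tau}{v_\tau}.
\]
You should know, however, that the paper's proof contains the identical gap: the factor $\ph(\tau)$ is present in its difference-quotient display and then silently disappears in \eqref{ddphi}. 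Indeed the formula as stated cannot hold verbatim in general: for $\ph(\la) = i\,\cc\tau^1\la^1$, which has a differentiable carapoint at $\tau$ with $\ph(\tau) = i$, the decomposition used later in the proof of Theorem \ref{forward2} shows the right-hand side equals $\cc\tau^1\delta^1\norm{E_1v_\tau}^2 + \cc\tau^2\delta^2\norm{E_0v_\tau}^2$ plus a term that is nonlinear in $\delta$ unless it vanishes, and no choice of $Y$ and $v_\tau$ makes this equal $i\,\cc\tau^1\delta^1$, since $\norm{E_1v_\tau}^2 \geq 0$. So the discrepancy is inherited from the statement itself, which should carry the factor $\ph(\tau)$ (equivalently, assume the normalization $\ph(\tau)=1$, or apply the result to $\cc{\ph(\tau)}\ph$, which has the same generalized model); your proof is faithful to the paper's, and the ``rescaling'' sentence merely papers over the same issue that the published proof leaves silent.
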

\begin{proof}
Let $\la_t = \tau + t\delta$ where $\delta = (\delta^1, \delta^2) \in \C^2$ and $\RE \delta^1, \RE \delta^2 < 0$ (so that $\la_t \in \D^2$ for small enough $t>0$.) 

By Theorem \ref{genmodexists}, $\ph$ has a generalized model such that 
	\beq\label{genmodeeq} 1 - \ph(\la)\cc\ph({\mu}) = \ip{(1 - I_Y(\mu)\ad I_Y(\la))v_\la}{v_\mu}, \eeq
and such that $\tau$ is a $C$-point for the model. $v_\la$ extends continuously to the boundary on nontangential sets approaching $\tau$, and thus has a nontangential limit $v_\tau$ as $\la \to \tau$.  Then applying limits to \eqref{genmodeeq} as $\mu \nt \tau$ gives
\[
1 - \cc{\ph(\tau)}\ph(\la) = \ip{(1 - I(\tau)\ad I(\la))v_\la}{v_\tau}.
\]
Multiplying through by $-\ph(\tau)$ gives
\begin{align}
\ph(\la) - \ph(\tau) &= \ph(\tau)\ip{(I(\la) - 1)v_\la}{v_\tau} \notag \\
&= \ph(\tau)\ip{(I(\la) - 1)v_\tau}{v_\tau} + \ph(\tau)\ip{(I(\la) - 1)(v_\la - v_\tau)}{v_\tau} \label{diffph}.
\end{align}

 The difference $I(\la_t) - I(\tau)$ is given by 
\begin{align}
&I(\la_t) - I(\tau) =  \left[\frac{\cc\tau^1\la_t^1Y + \cc\tau^2\la_t^2(1-Y) - \cc\tau^1\cc\tau^2\la_t^1\la_t^2}{1 - \cc\tau^1\la_t^1(1-Y) + \cc\tau^2\la_t^2 Y} - 1\right] \notag \\
&= \left[  \frac{\cc\tau^1(\tau^1 + t\delta^1)Y + \cc\tau^2(\tau^2 + t\delta^2)(1-Y) - \cc\tau^1\cc\tau^2(\tau^1 +t\delta^1)(\tau^2+t\delta^2)}{1 - \cc\tau^1(\tau^1 + t\delta^1)(1-Y) - \cc\tau^2(\tau^2 + t\delta^2)Y} - 1 \right] \notag \\
&= \left[  \frac{(1 + t\cc\tau^1\delta^1)Y + (1 + t\cc\tau^2\delta^2)(1-Y) - (1 +t\cc\tau^1\delta^1)(1+t\cc\tau^2\delta^2)}{1 - (1 + t\cc\tau^1\delta^1)(1-Y) - (1 + t\cc\tau^2\delta^2)Y} - 1 \right] \notag \\
&= \frac{t\cc\tau^1\cc\tau^2\delta^1\delta^2}{\cc\tau^1\delta^1(1-Y) + \cc\tau^2\delta^2 Y}, \label{Idiff}
\end{align}
(We have used the fact that $I_Y(\tau) = 1_\M$ from Lemma \ref{iprop}).
Upon dividing by $t$ and applying the limit as $t \to 0^+$, we get
\begin{align} 
D_\delta I(\tau) &= \frac{\cc\tau^1\cc\tau^2\delta^1\delta^2}{\cc\tau^1\delta^1(1-Y) + \cc\tau^2\delta^2Y} \\
&= \frac{\delta^1\delta^2}{\tau^2\delta^1(1-Y) + \tau^1\delta^2Y}\label{ddI}.
\end{align}
Combining with \eqref{diffph}, we calculate a difference quotient.
\begin{align*}
\frac{\ph(\la_t) - \ph(\tau)}{t} &= \ph(\tau)\frac{1}{t}\ip{(I(\la_t) - 1)v_{\la_t}}{v_\tau} \notag \\
&= \ph(\tau)\ip{\frac{I(\la_t) - I(\tau)}{t}v_\tau}{v_\tau} \\ &\hspace{.2in} + \ph(\tau)\ip{\frac{I(\la_t) - I(\tau)}{t}(v_{\la_t} - v_\tau)}{v_\tau}. 
\end{align*}
Finally, letting $t \to 0^+$, we conclude
\beq \label{ddphi}
D_\delta \ph(\tau) = \ip{\frac{\delta^1\delta^2}{\tau^2\delta^1(1-Y) + \tau^1\delta^2 Y}v_\tau}{v_\tau}.
\eeq
\end{proof}

(A similar argument appears in \cite{amy10a} in the proof of Lemma 4.2.)

\section{Structure of rational model functions}\label{sectioni}

By Theorem \ref{genmodexists}, any Schur function $\ph$ with a carapoint at $\tau \in \T^2$ has a continuous generalized model at $\tau$. Be removing the modeling of a discontinuity from $v_\la$, we lose the ability to characterize the nature of the discontinuity in terms of the model; that is, we cannot use the behavior of $v_\la$ to examine differential structure of $\ph$ at $\tau$. Our main objective is to recapture a geometric condition that distinguishes between these two cases, in the spirit of the two variable Julia-Carath\'eodory theorem in \cite{amy10a}.

We begin with an example of a family of simple rational functions that possess a single nondifferentiable carapoint, illustrating the complicated nature of boundary singularities even for nice functions.

\begin{lemma}\label{phiy}
Let
	$$\ph_y(\la) = \frac{\cc\tau^1\la^1 y + \cc\tau^2\la^2 (1-y) - \cc\tau^1\cc\tau^2\la^1\la^2}{1 - \cc\tau^1\la^1(1-y) - \tau^2\la^2 y}.$$
For all $y \in (0,1)$, the function $\ph_y$ has a nondifferentiable carapoint at the point $\tau = (\tau^1, \tau^2) \in \T^2$.
\end{lemma}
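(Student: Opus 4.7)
The plan is to recognize $\ph_y$ as the scalar specialization of the operator-valued rational function $I_Y$ of Lemma \ref{iprop} (take $\M = \C$ and $Y = y \cdot 1_\C$), extract the directional derivative at $\tau$ from Lemma \ref{dderiv}, and conclude by observing that this derivative is not a linear functional of the direction $\delta$.

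First, I would apply Lemma \ref{iprop} in this one-dimensional setting to obtain that $\ph_y$ is contractive and analytic on $\D^2$ (so $\ph_y \in \schur$), that $\tau$ is a carapoint for $\ph_y$, and that $\ph_y(\tau) = 1$. The operator-valued and scalar carapoint definitions coincide here because $\norm{\ph_y(\la)} = \abs{\ph_y(\la)}$. Next, I would either invoke Lemma \ref{dderiv} with the scalar $v_\tau = 1$, or rerun the calculation \eqref{Idiff} with $Y$ replaced by the scalar $y$, to obtain
\[
D_\delta \ph_y(\tau) = \frac{\delta^1 \delta^2}{\tau^2 \delta^1 (1-y) + \tau^1 \delta^2 y}
\]
for every $\delta \in \C^2$ pointing into the bidisk at $\tau$.

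The substantive step is to deduce nondifferentiability from this expression. Suppose for contradiction that $D_\delta \ph_y(\tau) = a \delta^1 + b \delta^2$ on the open cone of admissible directions. Since both sides are rational in $\delta$ and agree on an open set, the polynomial identity
\[
(a \delta^1 + b \delta^2)\bigl(\tau^2 \delta^1 (1-y) + \tau^1 \delta^2 y\bigr) = \delta^1 \delta^2
\]
must hold on all of $\C^2$. Matching the coefficients of $(\delta^1)^2$ and $(\delta^2)^2$ forces $a \tau^2 (1-y) = 0$ and $b \tau^1 y = 0$; since $y \in (0,1)$ and $\tau^1, \tau^2 \in \T$ are nonzero, this gives $a = b = 0$, which is inconsistent with the cross term $\delta^1 \delta^2$ on the right-hand side.

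The main (and essentially only) obstacle is interpretive rather than computational: one must be sure that \emph{nondifferentiable} is being used in the sense of failure of nontangential differentiability at $\tau$, equivalently (via Theorem \ref{modelcpoint}) the failure of the directional derivative to be a linear functional of $\delta$. Once that is fixed, the coefficient-matching argument above is immediate, and the lemma is established for every $y \in (0,1)$.
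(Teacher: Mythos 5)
Your proposal is correct, and it assembles the same two ingredients as the paper's proof — the explicit directional derivative formula and the carapoint property — but obtains each by a different route. For the carapoint, the paper argues elementarily: along the diagonal ray $\la = (r\tau^1, r\tau^2)$ one computes $\ph_y(r\tau^1, r\tau^2) = r$, so the Carath\'eodory quotient $(1-\abs{\ph_y(\la)})/(1-\norm{\la}_\infty)$ tends to $1$ as $r \to 1$. You instead specialize Lemma \ref{iprop} to $\M = \C$, $Y = y$, which is legitimate (as you note, the operator-valued and scalar carapoint notions coincide for scalar functions), but it imports the result proved in \cite{aty12} rather than exhibiting the one-line computation; the paper's route is more self-contained and also identifies the value of the Carath\'eodory liminf. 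For the derivative, the paper states the formula ``by calculation'' and merely asserts nonlinearity, whereas your coefficient-matching argument makes that assertion rigorous — a genuine improvement in detail. One caution on your first suggested route to the derivative formula: Lemma \ref{dderiv} as stated is an existence statement (there exist \emph{some} $\M$, $Y$, $v_\tau$ realizing the formula), so you cannot literally invoke it ``with $v_\tau = 1$''; what makes this work is the observation that $(\C, v \equiv 1, \ph_y)$ is itself a generalized model of $\ph_y$ with a $C$-point at $\tau$ (the generalized model equation holds trivially), after which the proof of Lemma \ref{dderiv} applies verbatim. Your fallback — rerunning the computation \eqref{Idiff} with $Y$ replaced by the scalar $y$ — sidesteps this entirely and is exactly what the paper does. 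A last nitpick: the implication you need (nontangential differentiability forces $D_\delta\ph_y(\tau)$ to be linear in $\delta$) follows from the definition of nontangential differentiability, not from Theorem \ref{modelcpoint}, which instead relates differentiability to $C$-points of standard models.
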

\begin{proof}
By calculation,
	$$D_{-\delta}\ph_y(\la) = \frac{ \delta^1 \delta^2}{\tau^2\delta^1 (1 - y) + \tau^1\delta^2 y},$$
which is not linear in $\delta$, and so $\ph_y$ fails to be nontangentially differentiable at $\tau$. To see that $\ph_y$ has a carapoint at $\tau$, it is enough to check the Carath\'eodory condition along the ray $(r\tau_1, r\tau^2)$ as $r \to 1$. On this ray,
	$$\ph_y(r\tau^1, r\tau^2) = r.$$
Hence, if $\la = (r\tau^1,r\tau^2)$ tends to $\tau$,
		\begin{align*}
		\liminf_{\la \to \tau} \frac{1 - \abs{\ph_y(\la)}}{1 - \norm{\la}_\infty} &= \liminf_{r\to 1} \frac{1 - \abs{r}}{1 - r} \\
		&= 1
		\end{align*}
and so $\ph_y$ has a carapoint at $\tau$.
\end{proof}

In the boundary cases $y = 0$ and $y = 1$, the functions $\ph_0$ and $\ph_1$ are well behaved, as 
\beq\label{boundingcases} 
	\ph_1(\la) = \cc\tau^1\la^1, \hspace{.5in} \ph_0(\la) = \cc\tau^2\la^2,
\eeq 
respectively. That is, the singularity at $\tau$ disappears.

Note that the function $\ph_y$ is the scalar case of the generalized model function $I_Y$ in Theorem \ref{genmodexists}.

\begin{lemma}\label{iyint}
Let $Y$ be a positive contraction on a Hilbert space $\M$. Then there exists a projection-valued measure $E$ supported on the unit interval such that 
\beq\label{int}
I_Y(\la) = \int \! \ph_y(\la) \, \dd E(y).
\eeq

Furthermore, if $\sigma(Y) \cap (0,1) = \emptyset$, that is $Y = P$ is a projection, then 
\beq \label{secret}
I_Y(\la) = \cc\tau^1 \la^1 P + \cc\tau^2 \la^2 (1-P),
\eeq
the operator present in the standard model \eqref{model2}.
\end{lemma}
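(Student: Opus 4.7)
The plan is to invoke the spectral theorem for the positive contraction $Y$ and then read off both identities via the functional calculus. Since $Y$ is a positive contraction on $\M$, its spectrum satisfies $\sigma(Y) \subset [0,1]$, so there is a projection-valued measure $E$ on $[0,1]$ with $Y = \int y\,\dd E(y)$ and, more generally, $f(Y) = \int f(y)\,\dd E(y)$ for every bounded Borel function $f$ on $\sigma(Y)$.

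Fix $\la \in \D^2$. Before applying functional calculus, I would first verify that the denominator operator $D_\la(Y) \df 1 - \cc\tau^1\la^1(1-Y) - \cc\tau^2\la^2 Y$ is invertible, so that $I_Y(\la)$ is actually well-defined. For the scalar function $d_\la(y) = 1 - \cc\tau^1\la^1(1-y) - \cc\tau^2\la^2 y$ we have, for all $y \in [0,1]$,
\[
\abs{1 - d_\la(y)} = \abs{\cc\tau^1\la^1(1-y) + \cc\tau^2\la^2 y} \leq (1-y)\abs{\la^1} + y\abs{\la^2} < 1,
\]
the last inequality following from $\la \in \D^2$ and the fact that the left side is a convex combination of two numbers strictly less than one. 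By the spectral mapping theorem, $\sigma(D_\la(Y)) = d_\la(\sigma(Y)) \subset \{z : \abs{1-z} < 1\}$, so $D_\la(Y)$ is invertible in $\Lop(\M)$.

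Now both the numerator $N_\la(Y) = \cc\tau^1\la^1 Y + \cc\tau^2\la^2(1-Y) - \cc\tau^1\cc\tau^2\la^1\la^2$ and the (invertible) denominator $D_\la(Y)$ are polynomials of degree at most one in $Y$ with scalar coefficients. The (Borel) functional calculus is a $*$-algebra homomorphism that commutes with inversion of invertible elements, so
\[
I_Y(\la) = N_\la(Y)\, D_\la(Y)^{-1} = f_\la(Y), \qquad f_\la(y) \df \frac{N_\la(y)}{d_\la(y)} = \ph_y(\la).
\]
Applying the spectral integral representation yields
\[
I_Y(\la) = f_\la(Y) = \int f_\la(y)\,\dd E(y) = \int \ph_y(\la)\,\dd E(y),
\]
which is \eqref{int}.

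For the projection case, if $Y = P$ is a projection then $\sigma(Y) \subset \{0,1\}$, and the spectral measure is purely atomic with $E(\{1\}) = P$ and $E(\{0\}) = 1-P$. Hence
\[
I_P(\la) = \ph_1(\la)\,P + \ph_0(\la)(1-P),
\]
and it only remains to simplify $\ph_0$ and $\ph_1$. Direct substitution and cancellation give
\[
\ph_1(\la) = \frac{\cc\tau^1\la^1(1-\cc\tau^2\la^2)}{1-\cc\tau^2\la^2} = \cc\tau^1\la^1, \qquad \ph_0(\la) = \frac{\cc\tau^2\la^2(1-\cc\tau^1\la^1)}{1-\cc\tau^1\la^1} = \cc\tau^2\la^2,
\]
which proves \eqref{secret}. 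The only subtle step in the whole argument is the invertibility of the denominator; once that is in place the spectral theorem does all the work, so there is no serious obstacle.
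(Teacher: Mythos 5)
Your proof is correct and takes essentially the same approach as the paper, which simply invokes the spectral theorem for $Y$ to get \eqref{int} and evaluates $\ph_1(\la) = \cc\tau^1\la^1$, $\ph_0(\la) = \cc\tau^2\la^2$ on the atoms $E_1 = P$, $E_0 = 1-P$ for \eqref{secret}. Your extra step verifying invertibility of the denominator $1 - \cc\tau^1\la^1(1-Y) - \cc\tau^2\la^2 Y$ via the spectral mapping theorem is a detail the paper leaves implicit, and it is a sound addition.
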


\begin{proof}
Equation \eqref{int} follows immediately on application of the spectral theorem to $Y$.

To see Equation \eqref{secret}, note that if $\sigma(Y)\cap(0,1) = \emptyset$, so that $Y = P$, then 
$$\int \ph_y(\la) \, \dd E(y) = \cc\tau^1 \la^1 E_1 + \cc \tau^2 \la^2 E_0 = \cc\tau^1 \la^1 P + \cc\tau^2 \la^2 (1-P).$$ 
\end{proof}

\begin{lemma}\label{refereefix} Let $(\M, v, I_Y)$ be a model for $\ph\in\schur$. If $Y$ is a projection, then $(\M, v, I_Y)$ is a standard model.
\end{lemma}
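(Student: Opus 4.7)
The plan is to apply Lemma \ref{iyint} to replace $I_Y$ by its explicit projection form, then verify that the generalized model equation collapses to exactly the standard model equation under the natural orthogonal decomposition of $\M$.

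First, I would invoke the conclusion of Lemma \ref{iyint} in the projection case: since $Y = P$ is a projection, one has $I_Y(\la) = \cc\tau^1\la^1 P + \cc\tau^2\la^2(1-P)$. The key computation is then to expand $I_Y(\mu)\ad I_Y(\la)$. Using that $P$ and $1-P$ are mutually orthogonal projections (so that $P(1-P) = 0$, $P^2=P$, and $(1-P)^2 = 1-P$) together with $\abs{\tau^1} = \abs{\tau^2} = 1$, the cross terms vanish and the modulus-one scalars cancel, leaving
\[
I_Y(\mu)\ad I_Y(\la) = \cc\mu^1\la^1 P + \cc\mu^2\la^2(1-P).
\]

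Next, I would set $\M_1 = \ran P$ and $\M_2 = \ran(1-P)$, giving $\M = \M_1 \oplus \M_2$ with $P_{\M_1} = P$ and $P_{\M_2} = 1-P$. Then in the compressed notation of Definition \ref{model}, the operator $\la = \la^1 P_{\M_1} + \la^2 P_{\M_2}$ satisfies $\mu\ad\la = \cc\mu^1\la^1 P_{\M_1} + \cc\mu^2\la^2 P_{\M_2}$, which matches the expression above for $I_Y(\mu)\ad I_Y(\la)$ verbatim.

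Substituting this identification into the generalized model equation
\[
1 - \cc{\ph(\mu)}\ph(\la) = \ip{(1 - I_Y(\mu)\ad I_Y(\la))v_\la}{v_\mu}
\]
yields
\[
1 - \cc{\ph(\mu)}\ph(\la) = \ip{(1 - \mu\ad\la)v_\la}{v_\mu}_\M,
\]
which is precisely \eqref{model2}. Hence $(\M_1 \oplus \M_2, v)$ is a standard model for $\ph$, completing the proof. There is no serious obstacle here; the argument is essentially bookkeeping, with the only subtlety being to recognize that the unimodular factors $\cc\tau^i$ appearing inside $I_Y$ cancel against their conjugates in $I_Y\ad$ so that the twist by $\tau$ disappears in the quadratic form.
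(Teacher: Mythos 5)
Your proposal is correct and follows essentially the same route as the paper: invoke \eqref{secret} from Lemma \ref{iyint}, expand $I_Y(\mu)\ad I_Y(\la)$ so that orthogonality of $P$ and $1-P$ kills the cross terms and the unimodular factors $\cc\tau^i$ cancel, and recognize the result as $\mu\ad\la$ in the compressed notation of Definition \ref{model}. The only difference is that you spell out the decomposition $\M = \ran P \oplus \ran(1-P)$ and the cancellation details, which the paper's terser computation leaves implicit.
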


\begin{proof}
If $Y$ is a projection, then $I_Y$ can be written as in \eqref{secret}. Then the generalized model equation can be rewritten as

\begin{align*} 
1 - \cc\ph(\mu)\ph(\la) &= \ip{1 - I(\mu)\ad I(\la) u_\la}{u_\mu} \\
&= \ip{(1 - (\cc\tau^1\mu^1 P + \cc\tau^2\mu^2 (1-P))\ad(\cc\tau^1\la^1 P + \cc\tau^2\la^2 (1-P)))u_\la}{u_\mu} \\
&= \ip{(1 - \mu\ad\la)u_\la}{u_\mu},
\end{align*}
and thus $(\M, v, I_Y)$ is a standard model as in Definition \ref{model}.
\end{proof}

To investigate the behavior of a generalized model $(\M, u, I_Y)$, we first develop some properties of the one parameter family of scalar functions $\ph_y$. Every function $\ph_y$ has an explicit model (a statement that appears without proof as Proposition 6.3 in \cite{amy10a}).

\begin{lemma}\label{scalarphi}
For a real number $y$, $0<y<1$, let $\ph_y$ be the inner function on $\C^2$ given by
 \beq\label{phiy2}
 \ph_y(\la) = \frac{\cc\tau^1 \la^1 y + \cc \tau^2\la^2 (1-y) - \cc\tau^1\cc\tau^2\la^1\la^2}{1 - \cc\tau^1 \la^1 (1-y)  - \cc\tau^2\la^2 y}. 
 \eeq
 Then any model $(\M,u)$ of $\ph_y$ has a $B$-point at $\tau = (\tau^1, \tau^2) \in \T^2$. Furthermore, $(\C^2, u_y)$ is a model for $\ph_y$, where $u_{y,\la}$ has the form
 \beq \label{imodel}
 u_{y,\la} = \frac{1}{1 - \cc\tau^1\la^1(1-y)- \cc\tau^2\la^2 y} \vectwo{\sqrt{y}(1 - \cc\tau^2\la^2)}{\sqrt{1-y}(1 - \cc\tau^1\la^1)}.
 \eeq

With respect to the orthonormal basis of $\C^2$ given by
\[
e_+ = \vectwo{\sqrt{1-y}}{\sqrt{y}}, \hspace{.1in} e_-=\vectwo{\sqrt{y}}{-\sqrt{1-y}},
\]
we can write the model as 
\beq \label{scalarmodel}
u_{y,\la} = \frac{\sqrt{(1-y)y}(\cc\tau^1\la^1 - \cc\tau^2\la^2)}{1 - \cc\tau^1\la^1 (1-y) - \cc\tau^2\la^2 y}e_+ + e_-.
\eeq

%Then $(0 , e_-, e_-, e_+ \otimes e_+)$ is a self-adjoint unitary realization for $u_{y,\la}$.
\end{lemma}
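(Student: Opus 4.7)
The plan is to address the three assertions of the lemma in order. The first — that every model $(\M, u)$ of $\ph_y$ has $\tau$ as a $B$-point — is immediate: Lemma \ref{phiy} has already established that $\tau$ is a carapoint for $\ph_y$, so the equivalence $(1)\Leftrightarrow(3)$ of Theorem \ref{bpoint} delivers the conclusion.

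For the second assertion, that $(\C^2, u_y)$ with $u_{y,\la}$ as in \eqref{imodel} is a model, the natural approach is direct verification of the model equation \eqref{modeleq}. I would decompose $\C^2 = \M_1 \oplus \M_2$ along the two standard coordinate axes and write $D(\la) = 1 - \cc\tau^1\la^1(1-y) - \cc\tau^2\la^2 y$ and $N(\la) = \cc\tau^1\la^1 y + \cc\tau^2\la^2(1-y) - \cc\tau^1\cc\tau^2\la^1\la^2$ for the denominator and numerator of $\ph_y$ in \eqref{phiy2}. Inserting \eqref{imodel} into the right-hand side of \eqref{modeleq}, both inner products share the common factor $D(\la)\cc{D(\mu)}$; after clearing it, the model equation reduces to the polynomial identity
\begin{align*}
D(\la)\cc{D(\mu)} - N(\la)\cc{N(\mu)} &= y\,(1-\cc\mu^1\la^1)(1-\cc\tau^2\la^2)(1-\tau^2\cc\mu^2) \\
&\quad + (1-y)(1-\cc\mu^2\la^2)(1-\cc\tau^1\la^1)(1-\tau^1\cc\mu^1).
\end{align*}
Since $|\tau^1| = |\tau^2| = 1$, this is an identity in the four quantities $\cc\tau^1\la^1$, $\cc\tau^2\la^2$, $\tau^1\cc\mu^1$, $\tau^2\cc\mu^2$, which I would verify by direct expansion. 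A quick sanity check on the diagonal $\la = \mu$, $\tau = (1,1)$ confirms the balance between the two sides.

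For the third assertion, orthonormality of $\{e_+, e_-\}$ is immediate from $y \in (0, 1)$. The formula \eqref{scalarmodel} then follows by a routine change of basis: evaluate $\ip{u_{y,\la}}{e_\pm}$ from \eqref{imodel} and simplify the two resulting expressions using the definition of $D(\la)$, whereupon the coefficient of one basis vector collapses to a constant and the coefficient of the other to the claimed rational function.

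The main obstacle is the polynomial identity in the second step: it is elementary but bookkeeping-heavy. A cleaner conceptual route would be to recognize $\ph_y$ as the scalar specialization of the operator-valued inner function $I_Y$ from Lemma \ref{iprop} with $Y = y$, and obtain \eqref{imodel} directly from the canonical Agler decomposition of a degree-$(1,1)$ rational inner function, thereby bypassing the explicit polynomial manipulation.
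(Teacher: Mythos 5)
Your proposal is correct, and its second and third steps match the paper's (the paper dispenses with the model equation by saying ``a straightforward calculation shows,'' so your explicit polynomial identity is just the fleshed-out version of that verification; I checked that the identity you wrote down does hold). Where you genuinely diverge is the first assertion. The paper never invokes Lemma \ref{phiy} here: it proves the $B$-point claim quantitatively, by estimating the explicit model function on a nontangential set, via the chain of inequalities $\abs{\cc\tau^1\la^1 - \cc\tau^2\la^2} \leq 2c\abs{1 - (1-y)\cc\tau^1\la^1 - y\cc\tau^2\la^2}$, arriving at the bound $\norm{u_{y,\la}} \leq 2c\sqrt{y(1-y)} + 1$ in \eqref{ubound}. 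Your route --- carapoint from Lemma \ref{phiy} plus the equivalence $(1)\Leftrightarrow(3)$ of Theorem \ref{bpoint} --- is shorter, non-circular (Theorem \ref{bpoint} is imported from the literature, not proved here), and arguably cleaner for the ``any model'' phrasing, since even the paper must implicitly pass through Theorem \ref{bpoint} to upgrade ``this explicit model has a $B$-point'' to ``every model does.'' What the paper's heavier approach buys is precisely the uniform estimate \eqref{ubound}: that bound, uniform over $y \in (0,1)$ after maximizing $\sqrt{y(1-y)}$, is reused essentially in the proof of Theorem \ref{forward}, so your qualitative argument, while sufficient for the lemma as stated, would leave a gap downstream that you would have to fill by doing the paper's estimate anyway.

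One caveat on your third step: if you compute $\ip{u_{y,\la}}{e_\pm}$ literally from \eqref{imodel}, the $e_-$-coefficient comes out as $\bigl(y(1-\cc\tau^2\la^2) - (1-y)(1-\cc\tau^1\la^1)\bigr)/\bigl(1 - \cc\tau^1\la^1(1-y) - \cc\tau^2\la^2 y\bigr)$, which is not constant, and the $e_+$-coefficient involves $2 - \cc\tau^1\la^1 - \cc\tau^2\la^2$ rather than $\cc\tau^1\la^1 - \cc\tau^2\la^2$. The expression \eqref{scalarmodel} actually corresponds to \eqref{imodel} with the sign of the second component reversed; this is a sign typo internal to the paper, and it is harmless (negating one component of a model function changes neither the model equation nor the norm), but your ``routine change of basis'' will surface it, so you should flag the sign rather than assert the two displayed formulas agree verbatim.
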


\begin{proof}
A straightforward calculation shows that 
\[
1 - \ph_y(\la)\ad \ph_y(\la) = \ip{(1 - \mu\ad\la)u_{y,\la}}{u_{y,\mu}}.
\]

To show that $\tau$ is a $B$-point for $\ph_t$, we need to show that $u_{y,\la}$ is bounded as $\la \to \tau$ nontangentially. Let $S$ be a set in $\D^2$ that approaches $\tau$ nontangentially. Then there exists a $c > 0$ so that for $\la \in S$,
\[
\abs{\tau - \la} \leq c (1 - \abs{\la}).
\]
We will show that the coefficient of $e_+$ in \eqref{scalarmodel} is bounded on $S$. To do so, notice that 
\begin{align*}
\abs{\cc\tau^1\la^1 - \cc\tau^2\la^2} &= \abs{(1 - \cc\tau^2\la^2) + (\cc\tau^1\la^1 - 1)} \\
&\leq \abs{1-\cc\tau^1\la^1} + \abs{1 - \cc\tau^2\la^2} \\
&\leq 2\max \{\abs{1-\cc\tau^1\la^1}, \abs{1 - \cc\tau^2\la^2}\} \\
&\leq 2c\min \{(1 - \abs{\cc\tau^1\la^1}),(1 - \abs{\cc\tau^2\la^2})\} \\
&\leq 2c [(1-y)(1 - \abs{\cc\tau^1\la^1}) + y(1 - \abs{\cc\tau^2\la^2})] \\
&= 2c [(1-y) - (1-y)\abs{\cc\tau^1\la^1} + y - y\abs{\cc\tau^2\la^2}] \\
&= 2c [1 - (1-y)\abs{\cc\tau^1\la^1} - y\abs{\cc\tau^2\la^2}] \\
&\leq 2c\abs{1 - (1-y)\cc\tau^1\la^1 - y\cc\tau^2\la^2}.
\end{align*}
Then $u_{y,\la}$ is bounded on the set $S$, as
\begin{align}
\norm{u_{y,\la}} &= \norm{ \frac{\sqrt{y(1-y)}(\cc\tau^1\la^1 - \cc\tau^2\la^2)}{1 - (1-y)\cc\tau^1\la^1 - y\cc\tau^2\la^2}e_+ + e_-} \notag \\
 &\leq 2c \sqrt{y(1-y)} \norm{e_+} + \norm{e_-} \notag \\  &= 2c\sqrt{y(1-y)} + 1, \label{ubound}
\end{align}
which depends only on $y$. Then $u_{y,\la}$ is bounded as $\la \to \tau$ nontangentially, and so by Theorem \ref{modelbpoint}, $\tau$ is a $B$-point for $\ph_y$. 
\end{proof}

Together, Lemmas \ref{phiy} and \ref{scalarphi} imply that any model for $\ph_{y}$ where $y \in (0,1)$ has a $B$-point that is not a $C$-point at $\tau$.

\section{The structure of generalized models at a carapoint}

We are now prepared to examine the relationship between the geometry of the model $(\M, u, I_Y)$ and the differentiability of $\ph$. We begin by addressing the trivial case in which $\ph \in \schur$ has a generalized model where the contraction $Y$ in the formula for $I$ is in fact a projection.

\begin{lemma}\label{proj}
Suppose that $\ph \in \schur$ has a continuous generalized model $(\M, v, I_P)$ where $P$ is a projection acting on $\M$. Then $\ph$ has a differentiable carapoint at $\tau$.
\end{lemma}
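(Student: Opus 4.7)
The plan is to reduce the statement about generalized models to the analogous result about standard Hilbert space models, where the classical machinery of Agler, \mc Carthy, and Young in Theorems \ref{bpoint}, \ref{modelbpoint}, and \ref{modelcpoint} directly applies. The bridge between the two worlds is already provided by Lemma \ref{refereefix}: when $Y = P$ is a projection, the operator $I_P(\la)$ collapses to $\cc\tau^1\la^1 P + \cc\tau^2\la^2 (1-P)$, and the generalized model equation reduces to the standard model equation $1 - \cc{\ph(\mu)}\ph(\la) = \ip{(1 - \mu\ad\la) v_\la}{v_\mu}$. So $(\M, v)$ is automatically a standard Hilbert space model of $\ph$ in the sense of Definition \ref{model}.

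Next, I would translate the hypothesis that $(\M, v, I_P)$ is \emph{continuous at $\tau$} into the statement that $\tau$ is a $C$-point for the standard model $(\M, v)$. By assumption the generalized model function $v_\la$ extends continuously in norm to any set approaching $\tau$ nontangentially; since this is exactly the definition (Definition \ref{modelpoints}) of $\tau$ being a $C$-point for $(\M, v)$ once we regard it as a standard model via Lemma \ref{refereefix}, the reduction is immediate.

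With this reformulation in hand, Theorem \ref{modelcpoint} of Agler, \mc Carthy, and Young applies directly and yields that $\ph$ is nontangentially differentiable at $\tau$. To finish, I still have to verify that $\tau$ is a carapoint of $\ph$ in the sense of Definition \ref{cara1}. Since $C$-points are in particular $B$-points, Theorem \ref{bpoint} implies that $\tau$ is a carapoint for $\ph$. Combining this with nontangential differentiability completes the proof.

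Because Lemma \ref{refereefix} has already done the heavy lifting of identifying projection-valued $I_Y$ with the standard model, I do not expect a genuine obstacle here; the only subtle point is conceptual, namely recognizing that continuity of the generalized model function is literally the $C$-point hypothesis for the associated standard model. The proof is therefore short, and essentially consists of invoking Lemma \ref{refereefix} followed by Theorems \ref{bpoint} and \ref{modelcpoint}.
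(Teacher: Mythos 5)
Your proposal is correct and follows essentially the same route as the paper: invoke Lemma \ref{refereefix} to recognize $(\M, v, I_P)$ as a standard model, observe that the continuity hypothesis makes $\tau$ a $C$-point for that standard model, and apply Theorem \ref{modelcpoint}. Your additional step of deducing the carapoint property via Theorem \ref{bpoint} (since a $C$-point is a $B$-point) is in fact slightly more careful than the paper, which folds that claim into the citation of Theorem \ref{modelcpoint} without comment.
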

\begin{proof}
By Lemma \ref{refereefix}, $(\M, v, I_P)$ is a standard model. By hypothesis, $v$ extends continuously at $\tau$, and so $\tau$ is a $C$-point for $(\M, v, I_P)$ viewed as a standard model. Therefore, by Theorem \ref{modelcpoint}, $\ph$ has a differentiable carapoint at $\tau$.
\end{proof}

We need the following geometrical Lemma about the behavior of the model function at $\tau$. Recall that if a model has a $C$-point at $\tau$ then the model function extends continuously to $\tau$ on sets approaching $\tau$ nontangentially (see Definition \ref{modelpoints}). In this case, a sequence $v_\la$ as $\la \to \tau$ will have a nontangential limit at $\tau$, which we denote $\lim_{\la\nt\tau} v_\la = v_\tau$.
\begin{theorem}\label{boundedvchi}
Let $\ph \in \schur$ have a carapoint at $\tau \in \T^2$. Then for a generalized model $(\M, v, I_Y)$ with a $C$-point at $\tau$,
\[
\norm{v_\tau} > 0.
\] 
\end{theorem}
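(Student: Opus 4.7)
The plan is to exploit the fact that $I_Y$ collapses to a scalar multiple of the identity along the diagonal ray $r \mapsto r\tau$, and then pass to the limit $r \to 1^-$ in the generalized model equation. This will yield a boundary identity that forces $\ph$ to be a unimodular constant whenever $v_\tau = 0$.

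First, I substitute $\la = r\tau$ with $r \in (0,1)$ into formula \eqref{olddefi} for $I_Y$. Each factor $\cc\tau^j\la^j$ reduces to $r$, the numerator collapses to $rY + r(1-Y) - r^2 = r(1-r)$, and the denominator to $1 - r(1-Y) - rY = 1-r$, giving
\[
I_Y(r\tau) = r \cdot 1_\M.
\]
Substituting $\mu = r\tau$ into the generalized model equation with $\la \in \D^2$ arbitrary then yields
\[
1 - \cc{\ph(r\tau)}\ph(\la) = \ip{(1 - r I_Y(\la))v_\la}{v_{r\tau}}.
\]

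Now I let $r \to 1^-$. Because the radial approach is nontangential, the $C$-point hypothesis gives $v_{r\tau} \to v_\tau$ in norm, and Theorem \ref{modelbpoint} supplies $\ph(r\tau) \to \ph(\tau)$. Specializing instead to $\la = \mu = r\tau$ collapses the model equation to $1 - \abs{\ph(r\tau)}^2 = (1 - r^2)\norm{v_{r\tau}}^2 \to 0$, which shows $\abs{\ph(\tau)} = 1$. Passage to the limit in the displayed equation above — routine since both factors of the inner product converge in norm — therefore yields the key identity
\[
1 - \cc{\ph(\tau)}\ph(\la) = \ip{(1 - I_Y(\la))v_\la}{v_\tau} \qquad (\la \in \D^2).
\]

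The conclusion is now immediate: if $v_\tau = 0$, the right side vanishes identically in $\la$, so $\cc{\ph(\tau)}\ph(\la) = 1$, and using $\abs{\ph(\tau)} = 1$ we obtain $\ph(\la) \equiv \ph(\tau)$, a unimodular constant. Excluding this degenerate case (in which any generalized model is trivial and the theorem is vacuous), we conclude $\norm{v_\tau} > 0$. The main subtlety is not any single computation but the joint orchestration of three limiting facts — $v_{r\tau} \to v_\tau$, $\ph(r\tau) \to \ph(\tau)$, and $\abs{\ph(\tau)} = 1$ — since each relies on a different piece of the hypothesis (the $C$-point continuity, Theorem \ref{modelbpoint}, and the diagonal specialization of the model equation, respectively).
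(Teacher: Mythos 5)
Your proof is correct, but it takes a genuinely different route from the paper's. Both arguments hinge on the same computation, namely that along the radial ray $I_Y(r\tau)=r\,1_\M$, but they diverge after that. The paper specializes the model equation on the diagonal ($\mu=\la=r\tau$) to get $1-\abs{\ph(r\tau)}^2=(1-r^2)\norm{v_{r\tau}}^2$, identifies $\norm{v_\tau}^2$ with the Carath\'eodory constant $\alpha=\liminf_{\la\nt\tau}\tfrac{1-\abs{\ph(\la)}}{1-\norm{\la}_\infty}$, and then cites \cite[Theorem 4.9]{amy10a} for the fact that $\alpha>0$ when $\ph$ is non-constant. You instead freeze $\mu=r\tau$ with $\la$ free, pass to the limit to obtain the boundary identity $1-\cc{\ph(\tau)}\ph(\la)=\ip{(1-I_Y(\la))v_\la}{v_\tau}$, and observe that $v_\tau=0$ would force $\ph$ to be a unimodular constant. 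Your version is more self-contained (no appeal to the external quantitative theorem), while the paper's version buys the extra information $\norm{v_\tau}^2=\alpha$, which it actually reuses later (in the proof of Theorem \ref{forward}). Note that both arguments must set aside constant unimodular $\ph$ --- the theorem genuinely fails there, since $v\equiv 0$ is then a valid generalized model --- and the paper does this with the same quiet caveat (``so long as $\ph$ is not constant'') that you make explicit. One small imprecision: you invoke Theorem \ref{modelbpoint} to get $\ph(r\tau)\to\ph(\tau)$, but that theorem is stated for standard models, not generalized ones; the correct chain is that $\tau$ is a carapoint by hypothesis, so by Theorem \ref{bpoint} every standard model of $\ph$ (which exists) has a $B$-point at $\tau$, and then Theorem \ref{modelbpoint} gives the nontangential limit. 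This is a citation detail, not a gap.
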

\begin{proof}
On taking limits as $\mu \to \la$, the model equation 
\[
1 - \cc{\ph(\mu)}\ph(\la) = \ip{(1 - I(\mu)\ad I(\la))v_\la}{v_\mu}
\]
becomes
\beq\label{posv}
1 - \norm{\ph(\la)}^2 = \norm{v_\la}^2 - \norm{I(\la)v_\la}^2.
\eeq
From \eqref{Idiff}, when $\la_t = \tau + t\delta$,
	$$ I(\la_t) - I(\tau) = \frac{t\delta^1\delta^2}{\tau^2\delta^1(1-Y) + \tau^1\delta^2 Y}.$$
When $\la_t = \tau + t(-\tau)$, this becomes 
\[
I(\la_t) - 1 = -t
\]
and so $I(\la_t) = 1 - t$. Plugging into \eqref{posv}, 
\beq \label{numjulia}
1 - \abs{\ph(\la_t)}^2 = \norm{v_{\la_t}}^2 - \norm{(1-t)v_{\la_t}}^2 = (2t - t^2)\norm{v_{\la_t}}^2.
\eeq
Additionally,
\beq \label{denomjulia}
1 - \norm{\la_t}^2_\infty = 1 - \norm{(\tau + t(-\tau)}^2_\infty = (2t - t^2)\norm{\tau}_\infty^2 = (2t - t^2).
\eeq
Combining \eqref{numjulia} with \eqref{denomjulia} yields
\[
 \norm{v_{\la_t}}^2 = \frac{1 - \abs{\ph(\la_t)}^2}{1 - \norm{\la_t}^2_\infty}.
\]
On application of limits, we get
\[
\norm{v_\tau}^2 = \lim_{t \to 0^+} \frac{1 - \abs{\ph(\la_t)}^2}{1 - \norm{\la_t}^2_\infty} = \lim_{t \to 0^+} \frac{1 - \abs{\ph(\la_t)}}{1 - \norm{\la_t}_\infty}.
\]
However, as $\tau$ is a carapoint of $\ph$, this quantity has a non-tangential limit, and hence
\[
\lim_{t \to 0^+} \frac{1 - \abs{\ph(\la_t)}}{1 - \norm{\la_t}_\infty} = \liminf_{\la \nt \tau} \frac{1 - \abs{\ph(\la)}}{1 - \norm{\la}_\infty}.
\]
(see, for example, \cite{amy10a} or \cite{jaf93}). 
Finally, so long as $\ph$ is not constant, as $\tau$ is a carapoint for $\ph$, by \cite[Theorem 4.9]{amy10a}, 
\[
\liminf_{\la \nt \tau} \frac{1 - \abs{\ph(\la)}}{1 - \norm{\la}_\infty} = \alpha > 0,
\]
which gives $\norm{v_\tau} > 0$ .
\end{proof}

We are now prepared to state and prove the a converse of Theorem \ref{genmodexists}. The content of the following Theorem is essentially that a generalized model is a sum of scalar standard models, and the singular behavior modeled by $I_Y$ is built up from the singular behavior of the scalar functions $\ph_y$ sharing a singular carapoint at $\tau$.

\begin{theorem}\label{forward}
Let $\ph$ be a function in $\schur$ and $(\M, v, I_Y)$ a model for $\ph$ with a $C$-point at $\tau$. Then $\tau$ is a carapoint for $\ph$.
\end{theorem}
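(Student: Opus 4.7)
The plan is to test the model equation on the radial sequence $\la_t=(1-t)\tau$, $t\to 0^+$, which is a natural choice because the operator $I_Y$ collapses to a scalar multiple of the identity along this ray and the sequence approaches $\tau$ nontangentially (with nontangential constant $c=1$, since $\|\tau-\la_t\|_\infty = t = 1-\|\la_t\|_\infty$). This is essentially the same computation that appears in the proof of Theorem \ref{boundedvchi}, only read in the opposite direction: there, the carapoint hypothesis was used to conclude $\|v_\tau\|>0$; here, continuity of $v$ at $\tau$ will be used to conclude the Carath\'eodory quotient is bounded.

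First I would set $\mu=\la$ in the generalized model equation to obtain
\[
1-|\ph(\la)|^2 = \|v_\la\|^2 - \|I_Y(\la)v_\la\|^2.
\]
Next I would substitute $\la=\la_t=(1-t)\tau$ into the formula \eqref{olddefi} for $I_Y$; the numerator and denominator simplify so that $I_Y(\la_t) = (1-t)\,1_\M$. (This computation already appears inside the derivation of \eqref{Idiff} and the paragraph following it.) Plugging this in yields
\[
1-|\ph(\la_t)|^2 = \bigl(1-(1-t)^2\bigr)\|v_{\la_t}\|^2 = (2t-t^2)\|v_{\la_t}\|^2,
\]
while $1-\|\la_t\|_\infty^2 = 1-(1-t)^2 = 2t-t^2$. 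Dividing gives the identity
\[
\|v_{\la_t}\|^2 \;=\; \frac{1-|\ph(\la_t)|^2}{1-\|\la_t\|_\infty^2}.
\]

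To pass from the squared quotient to the one in Definition \ref{cara1}, I would use the elementary bound
\[
\frac{1-|\ph(\la_t)|}{1-\|\la_t\|_\infty} \;=\; \frac{1-|\ph(\la_t)|^2}{1-\|\la_t\|_\infty^2}\cdot\frac{1+\|\la_t\|_\infty}{1+|\ph(\la_t)|} \;\leq\; 2\,\|v_{\la_t}\|^2,
\]
valid since $1+\|\la_t\|_\infty\leq 2$ and $1+|\ph(\la_t)|\geq 1$. Finally, because $\{\la_t\}$ approaches $\tau$ nontangentially and $\tau$ is a $C$-point of $(\M,v,I_Y)$, the vectors $v_{\la_t}$ converge in norm to some $v_\tau\in\M$, so $\|v_{\la_t}\|^2$ is bounded as $t\to 0^+$. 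Hence the Carath\'eodory quotient is bounded along the sequence $\la_t$, which is exactly the condition required by Definition \ref{cara1} for $\tau$ to be a carapoint of $\ph$.

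There is no real obstacle here: the only step requiring any genuine work is the verification that $I_Y((1-t)\tau)=(1-t)\,1_\M$, and that is the same algebraic simplification already used in the proof of Theorem \ref{boundedvchi}. The content of the theorem is really the combinatorial observation that the same computation, once it is available, immediately gives both $\|v_\tau\|>0$ (as in Theorem \ref{boundedvchi}) and the boundedness of the Carath\'eodory quotient proved here.
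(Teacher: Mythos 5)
Your proof is correct, but it is genuinely different from the one in the paper. You verify Definition \ref{cara1} directly: restricting to the radial ray $\la_t=(1-t)\tau$, where $I_Y(\la_t)=(1-t)1_\M$, the diagonal of the model equation gives $\norm{v_{\la_t}}^2=\bigl(1-\abs{\ph(\la_t)}^2\bigr)/\bigl(1-\norm{\la_t}_\infty^2\bigr)$, and continuity of $v$ at $\tau$ bounds the Carath\'eodory quotient along that sequence. The paper instead takes a structural route: it uses the spectral decomposition of $Y$ (Lemma \ref{iyint}) and the explicit scalar models $u_{y,\la}$ of Lemma \ref{scalarphi} to assemble from $(\M, v, I_Y)$ a \emph{standard} model $(\M\oplus\M, U)$ for $\ph$, given in \eqref{umodel} and \eqref{stdmod}, shows $U$ is nontangentially bounded at $\tau$, and then invokes Theorems \ref{bpoint} and \ref{modelbpoint}. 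Your argument is shorter and uses a weaker hypothesis --- you only need $\norm{v_\la}$ bounded along one radial sequence, not full nontangential continuity --- and it makes transparent that this computation is the same one as in Theorem \ref{boundedvchi}, read in the opposite direction. What the paper's longer construction buys is the standard model $(\M\oplus\M,U)$ itself, which is not a disposable device: it is reused explicitly in the proof of Theorem \ref{forward2}, where the continuity properties of $U$ and the measures $E_{v_\tau,v_\tau}$ drive the classification of singular versus regular models. So your proof establishes the stated theorem more efficiently, but a paper organized around it would need to carry out the paper's decomposition elsewhere to support the later results.
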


\begin{proof}

First, in the case that $\sigma(Y) \cap (0,1) = \emptyset$, $Y$ is a projection and Lemma $\ref{proj}$ implies that $\ph$ has a differentiable carapoint at $\tau$.
 
On the other hand, assume that $\sigma(Y) \cap (0,1) \neq \emptyset$. By Lemma \ref{iyint}, there exists a spectral measure $E$ such that
\[
I_Y(\la) = \la^1 E_1 + \la^2 E_0 + \int_{(0,1)} \! \ph_{y}(\la) \, \dd  E(y),
\]

As $(\M, v, I_Y)$ is a model,
\beq \label{modelcalc0}
1 - \cc{\ph(\mu)}\ph(\la) = \ip{(1 - I_Y(\mu)\ad I_Y(\la))v_\la}{v_\mu}.
\eeq
We will show that $\tau$ is a carapoint for $\ph$ by deriving a standard model for $\ph$ and then proving that the model is nontangentially bounded at $\tau$, that is we will show that $\tau$ is a $B$-point and thus by Theorem \ref{bpoint} that $\tau$ is a carapoint for $\ph$. 

First, we derive an expression for $1 - I_Y(\mu)\ad I_Y(\la)$: 
\begin{align}
&1 - I_Y(\mu)\ad I(\la) \\ &=  1 - \left(\mu^1 E_1 +\mu^2 E_2 + \int_{(0,1)} \! \ph_y(\mu)\,\dd E(y)\right)\ad \times \\&\hspace{1in}\left(\la^1 E_1 + \la^2 E_0 + \int_{(0,1)}\! \ph_y(\la)\,\dd E(y)\right) \notag \\
&= 1 - \left( \cc\mu^1\la^1 E_1 + \cc\mu^2\la^2 E_0 + \int_{(0,1)}\! \cc{\ph_{y}(\mu)}\ph_{y}(\la)\,\dd E(y) \right) \notag \\
&=  (1 - \cc\mu^1\la^1)E_1 + (1 - \cc\mu^2\la^2)E_0 \\ &\hspace{1in}+ \int_{(0,1)}\!(1 - \cc{\ph_{y}(\mu)}\ph_{y}(\la))\,\dd E(y). \label{modelcalc1}
\end{align}
Each function $\ph_y$ can be modeled with $(\C^2, u_{y,\la})$ as given in Lemma \ref{scalarphi}, so continuing from \eqref{modelcalc1}, we get
\begin{align}
&\phantom{= \text{ }} (1 - \cc\mu^1\la^1)E_1 + (1 - \cc\mu^2\la^2)E_0 + \int_{(0,1)}\!(1 - \cc{\ph_{y}(\mu)}\ph_{y}(\la))\,\dd E(y) \notag \\
&=  (1 - \cc\mu^1\la^1)E_1 + (1 - \cc\mu^2\la^2)E_0 + \int_{(0,1)}\! \ip{(1- \mu\ad\la)u_{y, \la}}{u_{y, \mu}}\,\dd E(y) \notag \\
&= (1 - \cc\mu^1\la^1)E_1 + (1 - \cc\mu^2\la^2)E_0 \notag\\ &\hspace{.5in} + \int_{(0,1)}\! \ip{(1- \cc\mu^1\la^1)u^1_{y, \la}}{u^1_{y, \mu}}\, \dd E(y) \notag \\&\hspace{1in}+ \int_{(0,1)}\! \ip{(1- \cc\mu^2\la^2)u^2_{y, \la}}{u^2_{y, \mu}}\, \dd E(y) \notag \\
&= (1 - \cc\mu^1\la^1)\left( E_1 + \int_{(0,1)}\! \ip{u^1_{y, \la}}{u^1_{y, \mu}}\, \dd E(t) \right) \notag \\&\hspace{.5in}+ (1 - \cc\mu^2\la^2)\left(E_0 + \int_{(0,1)}\! \ip{u^2_{y, \la}}{u^2_{y, \mu}}\, \dd E(y)\right). \label{modelcalc2}
\end{align}
If we let
\begin{align}
&U_1(\la) = 1E_1 + 0E_0 + \int_{(0,1)}\! u_{y, \la}^1 \, \dd E(y), \notag \\ &U_2(\la) = 0E_1 + 1E_0 + \int_{(0,1)}\! u_{y, \la}^2\,\dd E(y) \label{umodel}
\end{align}
then we can substitute into \eqref{modelcalc2} to get
\beq
1 - I(\mu)\ad I(\la) = (1 - \cc\mu^1\la^1)U^1(\mu)\ad U^1(\la) + (1 - \cc\mu^2\la^2)U^2(\mu)\ad U^2(\la).
\eeq
Upon substitution of this expression into the generalized model equation \eqref{modelcalc0}, we get
\begin{align*}
1 - \cc{\ph(\mu)}\ph(\la) &= \ip{(1 - I(\mu)\ad I(\la))v_\la}{v_\mu} \\
&= \ip{((1 - \cc\mu^1\la^1)U_1(\mu)\ad U_1(\la) + (1 - \cc\mu^2\la^2)U_2(\mu)\ad U_2(\la))v_\la}{v_\mu} \\
&= (1 - \cc\mu^1\la^1)\ip{U_1(\la)v_\la}{U_1(\mu)v_\mu} + (1 - \cc\mu^2\la^2)\ip{U_2(\la)v_\la}{U_2(\mu)v_\mu}. 
\end{align*}
Then we have shown that $(\M \oplus \M, U)$ is a model for $\ph$ in the sense of Definition \ref{model}, where $U_\la$ is the function
\beq\label{stdmod}
U_\la = U(\la) = \vectwo{U_1(\la)v_\la}{U_2(\la)v_\la}.
\eeq
To show that $\tau$ is a $B$-point for $\ph$, by Theorem \ref{modelbpoint} it is enough to show that $U(\la)$ is bounded as $\la \nt \tau$. We will show that the component $U_1(\la)v_\la$ is bounded on a set that approaches $\tau$ nontangentially (that $U_2(\la)v_\la$ is bounded follows similarly). First, $U_1(\la)$ is a bounded operator. To see this, let $S$ be a set that approaches $\tau$ nontangentially such that for all $\la \in S$,
	$$\abs{\tau - \la} \leq c(1-\abs{\la}).$$ 

Trivially, the operator $1E_1$ is bounded. By Lemma \ref{scalarphi}, for any $y$ with $0<y<1$, for all $\la \in S$, 
\[
\norm{u_{y,\la}} \leq 2c\sqrt{y(1-y)} + 1.
\]
As the maximum of the function $f(x) = \sqrt{y(1-y)}$ is $1/2$, for all $y \in (0,1)$,
\[
\norm{u_{y,\la}} \leq c + 1.
\]
Thus, the family $\{u_{y,\la}\}$ is uniformly bounded on $S$. Let $u, v$ be arbitrary vectors in $\M$. Since $E$ is a spectral measure, 
\begin{align}
\abs{\ip{\left(\int_{(0,1)} \! u^i_{y,\la} \, \dd E(y)\right)u}{v}}  &= \abs{\int_{(0,1)} \! u^i_{y,\la} \, \dd E_{u,v}(y)} \notag \\
&\leq \int_{(0,1)} \! \abs{u^i_{y,\la}} \, \dd \abs{E_{u,v}(y)} \notag \\
&\leq \int_{(0,1)} \! (c+1) \, \dd \abs{E_{u,v}(y)} \notag \\
&\leq (c+1)\norm{E_{u,v}(y)} \notag \\
&\leq (c+1)\norm u \norm v. \label{Ubound}
\end{align}
Then $U_1(\la)$ is a bounded operator, and as the bound does not depend on the choice of $\la \in S$, the family of operators $\{U_1(\la)\}_{\la \in S}$ is uniformly bounded on $S$.  By \eqref{Ubound}, for all $\la \in S$,
\beq \label{Uvest}
\norm{U^i(\la)v_\la} \leq \norm{U^i(\la)}\norm{v_\la} \leq \sqrt{c+1}\norm{v_\la}.\eeq
 Recall that by hypothesis, the generalized model function $v_\la$ has a $C$-point at $\tau$ and thus $v_\la \to v_\tau$ as $\la \nt \tau$. Then for any sequence $\la_n \nt \tau$ in $S$, by Theorem \ref{boundedvchi}, 
  $$ \norm{U^i(\la)v_\la} \leq \sqrt{c+1}\norm{v_\la} \to \sqrt{c+1}\norm{v_\tau} = (\sqrt{c+1})\alpha.$$  

As each component of the model function $U$ is bounded on $S$ as $\la \to \tau$, so too is $U$. Then the model $(\M \oplus \M, U)$ has a $B$-point at $\tau$, and thus $\ph$ has a carapoint at $\tau$ by Theorem \ref{modelbpoint}. 
\end{proof}

\section{Model geometry and differentiability}

We are now in position to establish a condition on a generalized model for a function $\ph \in \schur$ at a carapoint $\tau$ that characterizes the differential structure of $\ph$ at $\tau$, in keeping with the spirit of Agler, {\mc}Carthy, and Young's generalization of the Julia-Carath\'eodory Theorem \cite{amy10a}. Recall that the purpose of a generalized model is to move the singular behavior out of the model function $v_\la$ and into the operator-valued map $I_Y(\la)$. Accordingly, while we can no longer look at the behavior of the model function $v_\la$ to characterize the differentiability of $\ph$, the positive contraction $Y$ encodes this information.

\begin{definition}
Suppose that a Schur function $\ph$ with a carapoint at $\tau$ has a generalized model $(\M, v, I_Y)$. Let $\N = \ker Y(1-Y)$ and denote the orthogonal complement of $\N$ in $\M$ by $\N^\perp$. Say that a generalized model is \emph{regular} if $P_{\N^\perp} v_\tau = 0$. Otherwise, the model is \emph{singular}. If instead $P_{\N} v_\tau = 0$, then the generalized model is \emph{purely singular}.
\end{definition}

\begin{remark}
We should point out that by the above definitions, if $Y$ is a projection then $(\M, v, I_Y)$ is a regular generalized model.
\end{remark}

These definitions allow us to make an explicit classification of the nontangential differentiability of a function $\ph$ at a carapoint $\tau$ in terms of the geometry of the model.

\begin{theorem}\label{forward2}
Let $\ph$ be a function in $\schur$.  $\ph$ has a singular generalized model at $\tau$ if and only if  $\ph$ has a nondifferentiable carapoint at $\tau$.
\end{theorem}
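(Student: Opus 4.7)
The strategy is to compute the directional derivative via Lemma~\ref{dderiv}, which gives
\[
D_\delta \ph(\tau) = \ip{\frac{\cc\tau^1\cc\tau^2\delta^1\delta^2}{\cc\tau^1\delta^1(1-Y)+\cc\tau^2\delta^2 Y}v_\tau}{v_\tau}
\]
for any continuous generalized model $(\M,v,I_Y)$ at $\tau$, and then to spectrally decompose $Y$ to isolate the contribution from $\N^\perp$. Since $Y$ is a positive contraction, $\N = \ker Y(1-Y) = \ker Y \oplus \ker(1-Y)$ is the reducing subspace on which $Y$ takes only the values $0$ and $1$, while on $\N^\perp$ the spectrum of $Y$ lies in the open interval $(0,1)$. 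On $\ker Y$ the operator-valued integrand collapses to the scalar $\cc\tau^2\delta^2$, on $\ker(1-Y)$ to $\cc\tau^1\delta^1$, and on $\N^\perp$ the spectral theorem rewrites the restricted quadratic form as an integral against the scalar measure $d\mu = d\ip{E(\cdot)v_\tau}{v_\tau}$. This yields
\[
D_\delta \ph(\tau) = \cc\tau^1\delta^1 \norm{P_{\ker(1-Y)} v_\tau}^2 + \cc\tau^2\delta^2 \norm{P_{\ker Y} v_\tau}^2 + \int_{(0,1)} \frac{\delta^1\delta^2}{\tau^2\delta^1(1-y) + \tau^1\delta^2 y}\, d\mu(y),
\]
whose first two pieces are $\C$-linear in $\delta$ and whose last depends on $v_\tau$ only through $P_{\N^\perp} v_\tau$.

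If the model is regular, so $P_{\N^\perp} v_\tau = 0$, the measure $\mu$ has no mass on $(0,1)$, the integral vanishes, and $D_\delta \ph(\tau)$ is $\C$-linear in $\delta$; thus $\ph$ is nontangentially differentiable at $\tau$. This handles one direction: if $\ph$ has a nondifferentiable carapoint at $\tau$, then the continuous generalized model guaranteed by Theorem~\ref{genmodexists} must be singular.

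For the converse I assume the model is singular so $\mu((0,1)) > 0$, and the task is to show that the integral term is genuinely nonlinear and cannot combine with the first two pieces to yield a $\C$-linear function. The plan is to test along the admissible family $\delta_s = (-s\tau^1,-\tau^2)$, $s > 0$. After cancellation, the integrand becomes $-s/(s(1-y)+y)$ and the integral equals $-g(s)$, where $g(s) = \int_{(0,1)} s/(s(1-y)+y)\,d\mu(y)$. Differentiating twice under the integral gives
\[
g''(s) = -2\int_{(0,1)} \frac{y(1-y)}{(s(1-y)+y)^3}\, d\mu(y),
\]
which is strictly negative when $\mu((0,1)) > 0$ since $y(1-y)>0$ on the support. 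Hence $g$ is strictly concave in $s$. Because the $\N$-contribution to $D_{\delta_s}\ph(\tau)$ is affine in $s$ along this family, the total is strictly concave in $s$ and cannot match the affine behaviour $s \mapsto -sc_1\tau^1 - c_2\tau^2$ that any $\C$-linear functional $L(\delta) = c_1\delta^1 + c_2\delta^2$ would produce. Therefore $\ph$ is not nontangentially differentiable at $\tau$, and Theorem~\ref{forward} furnishes that $\tau$ is a carapoint for $\ph$. The main obstacle is exactly this concavity step: pointwise nonlinearity of each individual integrand $\ph_y$ (already noted in Lemma~\ref{phiy}) is not automatically preserved under positive integration, and computing the strict sign of $g''$ under the integral is what rules out any accidental cancellation and is the crux of the proof.
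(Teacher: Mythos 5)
Your proof is correct, and its second half takes a genuinely different route from the paper's. The first half coincides in substance with the paper's $(\Leftarrow)$ direction: both derive the spectral decomposition \eqref{decompderiv} of $D_\delta\ph(\tau)$ from Lemma \ref{dderiv} and observe that regularity annihilates the integral over $(0,1)$, so a nondifferentiable carapoint forces the continuous model of Theorem \ref{genmodexists} to be singular. For the converse (a singular model yields a nondifferentiable carapoint), the paper argues by contrapositive through model theory: assuming $\ph$ is differentiable at $\tau$, it invokes Theorem \ref{modelcpoint} to make the derived standard model $(\M\oplus\M,U)$ of Theorem \ref{forward} continuous at $\tau$, and then uses the fact that the scalar models $u_{y,\la}$ of $\ph_y$ have $B$-points but never $C$-points (Lemmas \ref{phiy} and \ref{scalarphi}) to force $E_{v_\tau,v_\tau}$ to have no mass in $(0,1)$. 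You instead stay entirely at the level of the derivative formula and show directly that mass of $E_{v_\tau,v_\tau}$ in $(0,1)$ produces nonlinearity: along $\delta_s=(-s\tau^1,-\tau^2)$ the integral term is strictly concave in $s$ (your computation of $g''$ is right, and strict negativity follows because the integrand $y(1-y)/(s(1-y)+y)^3$ is strictly positive on $(0,1)$ and $\mu((0,1))>0$), while the $\N$-contribution and any candidate $\C$-linear functional are affine in $s$. Your route is more elementary and self-contained --- it bypasses Theorem \ref{modelcpoint} and the derived standard model entirely, and for this direction it needs only the easy implication that differentiability makes $D_\delta\ph(\tau)$ linear in $\delta$, whereas the paper's argument for the other direction (and your first half) must invoke the deeper converse. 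You also isolate and resolve a subtlety the paper's model-theoretic route never confronts head-on: a positive superposition of the individually nonlinear functions $\ph_y$ could a priori be linear, and the uniform concavity is exactly what excludes such cancellation. What the paper's route buys is thematic coherence: differentiability is read off from continuity of a standard model, in keeping with Theorems \ref{bpoint}--\ref{modelcpoint}. Two points you should make explicit to tighten the write-up: Lemma \ref{dderiv} is stated existentially, so note that its proof applies verbatim to any generalized model with a $C$-point at $\tau$ --- which a singular model has by definition, since $v_\tau$ is its nontangential limit, and which via Theorem \ref{forward} is also what makes $\tau$ a carapoint; and differentiation under the integral sign in $g''$ should be justified by the routine uniform bound $s(1-y)+y\geq\min(s,1)$ on compact $s$-intervals together with finiteness of $E_{v_\tau,v_\tau}$.
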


\begin{proof}
$(\Rightarrow)$: We show the contrapositive. Suppose that $\ph$ has a nontangentially differentiable carapoint at $\tau$. Let $(\M \oplus \M, U)$ be the standard model derived from $(\M, u, I_Y)$ given in \eqref{umodel} and \eqref{stdmod}. Then by Theorem \ref{modelcpoint}, the model function $U(\la)$ extends by continuity to $\tau$ on any set $S$ that approaches $\tau$ nontangentially, and so there exists a vector $U(\tau)$ so that
\[
\lim_{\la \nt \tau} U(\la) = U(\tau).
\]

Note that 
\[
\vectwo{U_1(\la)v_\la}{U_2(\la)v_\la} = \vectwo{U_1(\la) v_\tau}{U_2(\la) v_\tau} + \vectwo{U_1(\la) (v_\la - v_\tau)}{U_2(\la) (v_\la - v_\tau)},
\]
and so
$$ U(\tau) = \lim_{\la \nt \tau} U(\la) = \lim_{\la \nt \tau} \vectwo{U_1(\la)v_\tau}{U_2(\la)v_\tau}. $$

Now, consider the quantity 
\begin{align*}
&\norm{\vectwo{U_1(\la) v_\tau}{U_2(\la) v_\tau} - \vectwo{U_1(\mu) v_\tau}{U_2(\mu) v_\tau}}^2 \\ &= \norm{ \vectwo{(U_1(\la) - U_1(\mu))v_\tau}{(U_2(\la) - U_2(\mu))v_\tau}}^2 \\
&= \sum_{i=1}^2 \ip{ (U_i(\la) - U_i(\mu)) v_\tau}{(U_i(\la) - U_i(\mu)) v_\tau} \\
&= \sum_{i=1}^2 \ip{ (U_i(\la) - U_i(\mu))\ad(U_i(\la) - U_i(\mu)) v_\tau}{ v_\tau} \\
&= \sum_{i=1}^2 \ip{\int_{(0,1)}\!\abs{u^i_{y,\la} - u^i_{y,\mu}}^2\,\dd E(y) v_\tau}{v_\tau} \\
&= \sum_{i=1}^2 \int\!\abs{u^i_{y,\la} - u^i_{y,\mu}}^2\,\dd E_{v_\tau,v_\tau}(y).
\end{align*}

For any distinct sequences $\la_n, \mu_n \nt \tau$, 
\begin{align} 
&\lim_{n\to\infty} \sum_{i=1}^2 \int\!\abs{u^i_{y,\la_n} - u^i_{y,\mu_n}}^2\,\dd E_{v_\tau,v_\tau}(y) \notag \\ &= \lim_{n\to\infty} \norm{\vectwo{U_1(\la_n) v_\tau}{U_2(\la_n) v_\tau} - \vectwo{U_1(\mu_n) v_\tau}{U_2(\mu_n) v_\tau}}^2 = 0. \label{contlimit}
\end{align}
By Theorem \ref{boundedvchi},  $\norm{v_\tau} > 0$, and so $E_{v_\tau, v_\tau}$ is a finite, positive measure supported on $\sigma(Y)$ (see, e.g. \cite[p.257]{con97}).  Then for $y \in \sigma(Y)\cap (0,1)$, Equation \eqref{contlimit} implies that
\beq \label{badcpoint}
\lim_{n\to\infty} \abs{u^i_{y,\la_n} - u^i_{y,\mu_n}} = 0.
\eeq
But this would imply that the model function $u_{y, \la}$ had a $C$-point at $\tau$, which cannot happen for $y \in (0,1)$ by Lemma \ref{phiy} and Lemma \ref{scalarphi}. Thus if $U$ extends continuously at $\tau$, it must be the case that $P_{\ker Y(1-Y)^\perp} v_\tau = 0.$ We conclude that the generalized model $(\M, v, I_Y)$ cannot be singular.

\medskip

$(\Leftarrow)$: Suppose that $\ph \in \schur$ has a nondifferentiable carapoint at $\tau$. By Theorem \ref{genmodexists}, there exists a generalized model $(\M, v, I_Y)$ with a $C$-point at $\tau$.

To show that $(\M, v, I_Y)$ is singular, we show that 
	$$P_{\N^\perp}v_{\tau} \neq 0,$$
 using facts about the directional derivative of $\ph$ at $\tau$. From Lemma \ref{dderiv}, for $\delta$ pointing into the bidisk,
\beq \label{undecomp}
D_\delta \ph(\tau) = \ip{\frac{\delta^1\delta^2}{\tau^2\delta^1 (1-Y) + \tau^1\delta^2 Y}v_\tau}{v_\tau}.
\eeq

Decompose $Y$ as $1E_1 + 0E_0 + Y_0$, where $E_1$ and $E_0$ are projections onto $\ker Y$ and $\ker 1-Y$ respectively. Let $E = 1 - E_0 - E_1$. Then $Y$ can be written in block matrix form as 
\[
Y = \bbm 1 & 0 & 0 \\ 0 & 0 & 0 \\ 0 & 0 & Y_0 \ebm \begin{array}{c} \M_1 \\ \M_0 \\ \M_s \end{array}
\]
where $\M_1 = E_1 \M, \M_0 = E_0 \M$, and $\M_s = E \M$. (Recall that $Y$ is a positive contraction.)
Then
\begin{align*}
(\tau^2\delta^1(1-Y) + \tau^1\delta^2(Y))\inv &= \bbm \tau^1\delta^2 & 0 & 0 \\ 0 & \tau^2\delta^1 & 0 \\ 0 & 0 & \tau^2\delta^1(1-Y_0) + \tau^1\delta^2 Y_0 \ebm \inv \\
&= \bbm \frac{\cc\tau^1}{\delta^2} & 0 & 0 \\ 0 & \frac{\cc\tau^2}{\delta^1} & 0 \\ 0 & 0 & (\tau^2\delta^1(1-Y_0) + \tau^1\delta^2 Y_0)\inv \ebm,
\end{align*}
and so
\begin{align*}
\frac{\delta^1\delta^2}{\tau^2\delta^1 (1 - Y) + \tau^1\delta^2 Y} &= \bbm \cc\tau^1\delta^1 & 0 & 0 \\ 0 & \cc\tau^2\delta^2 & 0 \\ 0 & 0 & \frac{\delta^1\delta^2}{\tau^2\delta^1 (1 - Y_0) + \tau^1\delta^2 Y_0} \ebm \\ &= \cc\tau^1\delta^1 E_1 + \cc\tau^2\delta^2 E_0 + \frac{\delta^1\delta^2}{\tau^2\delta^1 (1 - Y_0) + \tau^1\delta^2 Y_0} E.
\end{align*}
Then the formula given in \eqref{undecomp} decomposes as
\begin{align}
D_\delta \ph(\tau) = &\ip{\cc\tau^1\delta^1 E_1 v_\tau}{E_1 v_\tau} + \ip{\cc\tau^2\delta^2 E_0 v_\tau}{E_0 v_\tau} \notag \\
&+  \ip{\frac{\delta^1\delta^2}{\tau^2\delta^1 (1-Y_0) + \tau^1\delta^2 Y_0} E v_\tau}{E v_\tau}. \label{decompderiv}
\end{align}

As $\ph$ has a nondifferentiable carapoint at $\tau$, the directional derivative cannot be linear in $\delta$. This implies that $Ev_\tau$ must be non-zero, but this is precisely the condition
\[
\lim_{\la \nt \tau} P_{\N^\perp} v_\la \neq 0.
\]
Therefore, $(M, v, I_Y)$ is a singular generalized model for $\ph$ at $\tau$.
\end{proof}

\begin{theorem}\label{diffmain}
Let $\ph \in \schur$ have a carapoint at $\tau$. $\ph$ has a regular generalized model if and only if $\tau$ is a differentiable carapoint for $\ph$.
\end{theorem}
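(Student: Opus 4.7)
The plan is to prove the two implications separately, leveraging the structural Theorems \ref{genmodexists}, \ref{forward}, \ref{forward2}, and \ref{modelcpoint}.

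For the $(\Leftarrow)$ direction, suppose $\tau$ is a differentiable carapoint for $\ph$. Theorem \ref{genmodexists} produces a generalized model $(\M, v, I_Y)$ with a $C$-point at $\tau$. The contrapositive of the $(\Rightarrow)$ direction of Theorem \ref{forward2} (which is precisely what that proof establishes about an arbitrary generalized model) guarantees that this model is not singular, hence is regular. This direction is essentially bookkeeping and presents no real obstacle.

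For the $(\Rightarrow)$ direction, suppose $(\M, v, I_Y)$ is a regular generalized model of $\ph$ at $\tau$. I will form the standard model $(\M \oplus \M, U)$ derived from $(\M, v, I_Y)$ as in the proof of Theorem \ref{forward}, with
\[
U(\la) = \begin{pmatrix} U_1(\la) v_\la \\ U_2(\la) v_\la \end{pmatrix},
\]
where $U_1(\la), U_2(\la)$ are as in \eqref{umodel}. The aim is to show that $\tau$ is a $C$-point of this standard model, whereupon Theorem \ref{modelcpoint} yields the nontangential differentiability of $\ph$ at $\tau$.

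The main technical step, and where I expect the real work to lie, is that regularity forces $U(\la)$ to converge in norm as $\la \nt \tau$. Regularity gives $P_{\N^\perp} v_\tau = 0$, so the scalar spectral measure $E_{v_\tau, v_\tau}$ assigns zero mass to the open interval $(0,1)$. Since by \eqref{umodel} each $U_i(\la)$ consists of a fixed boundary projection ($E_1$ for $i=1$, $E_0$ for $i=2$) plus the spectral integral $\int_{(0,1)} u^i_{y,\la}\, \dd E(y)$, the identity
\[
\left\| \int_{(0,1)} u^i_{y,\la}\, \dd E(y)\, v_\tau \right\|^2 = \int_{(0,1)} \left|u^i_{y,\la}\right|^2\, \dd E_{v_\tau,v_\tau}(y) = 0
\]
shows that $U_1(\la) v_\tau = E_1 v_\tau$ and $U_2(\la) v_\tau = E_0 v_\tau$ are independent of $\la$. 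Splitting $v_\la = v_\tau + (v_\la - v_\tau)$ and combining the uniform bound on $\|U_i(\la)\|$ over nontangential sets established in \eqref{Ubound} with the $C$-point convergence $v_\la \to v_\tau$, one obtains $U(\la) \to (E_1 v_\tau, E_0 v_\tau)^T$ in norm as $\la \nt \tau$, verifying the $C$-point property and closing the argument.
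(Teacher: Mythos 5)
Your proposal is correct, but it takes a genuinely different route from the paper's, at least in the substantive direction. The paper proves both implications entirely through the directional-derivative formula: for $(\Rightarrow)$ it inserts regularity into the decomposition \eqref{decompderiv} (obtained in the proof of Theorem \ref{forward2} from Lemma \ref{dderiv}), where $P_{\N^\perp}v_\tau = Ev_\tau = 0$ kills the nonlinear third term and leaves $D_\delta \ph(\tau) = \ip{\delta^1 E_1 v_\tau}{E_1 v_\tau} + \ip{\delta^2 E_0 v_\tau}{E_0 v_\tau}$, linear in $\delta$; for $(\Leftarrow)$ it argues that linearity of the derivative forces $P_{\N^\perp}v_\tau = 0$. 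Your $(\Rightarrow)$ instead reruns the construction of Theorem \ref{forward}: regularity gives $E_{v_\tau,v_\tau}\bigl((0,1)\bigr) = 0$, so the spectral integrals in \eqref{umodel} annihilate $v_\tau$, making $U_i(\la)v_\tau$ constant in $\la$; the uniform bound \eqref{Ubound} together with $v_\la \to v_\tau$ then gives norm convergence of $U(\la)$, i.e.\ a $C$-point of a standard model, and Theorem \ref{modelcpoint} yields differentiability. Your $(\Leftarrow)$ simply cites the contrapositive of Theorem \ref{forward2}$(\Rightarrow)$ applied to the model from Theorem \ref{genmodexists}; this is legitimate and non-circular, since \ref{forward2} is proved independently of \ref{diffmain}, and "not singular" is by definition "regular." What your route buys: it stays entirely within the model-theoretic equivalences actually stated in the paper, and in particular avoids the paper's implicit reliance on the fact --- imported from \cite{amy10a} but never stated as a theorem here --- that linearity of $D_\delta\ph(\tau)$ in $\delta$ is equivalent to nontangential differentiability at a carapoint; in effect you also supply the converse of the continuity argument inside \ref{forward2}'s proof (regular implies $U$ extends continuously), which is a nice structural fact in its own right. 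What the paper's route buys: brevity, and an explicit closed formula for $D_\delta\ph(\tau)$ at a regular carapoint. Note that both arguments share the same standing hypothesis, needed even to make sense of regularity, that the generalized model in question has a $C$-point at $\tau$ so that $v_\tau$ exists.
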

\begin{proof}
$(\Rightarrow):$ Suppose that $\ph$ has a regular generalized model $(\M, v, I_Y)$ at $\tau$. From \eqref{decompderiv}, 
\begin{align*}
D_\delta \ph(\tau) = &\ip{\delta^1 E_1 v_\tau}{E_1 v_\tau} + \ip{\delta^2 E_0 v_\tau}{E_0 v_\tau} \notag \\
&+  \ip{\frac{\delta^1\delta^2}{\delta^1 (1-Y_0) + \delta^2 Y_0} E v_\tau}{E v_\tau},
\end{align*}
but as the model is regular, this reduces to 
\[
D_\delta \ph(\tau) = \ip{\delta^1 E_1 v_\tau}{E_1 v_\tau} + \ip{\delta^2 E_0 v_\tau}{E_0 v_\tau}.
\]
Clearly the directional derivative is linear in $\delta$, and thus $\tau$ is a differentiable carapoint for $\ph$.

$(\Leftarrow)$: Assume that $\ph$ has a differentiable carapoint. By Theorem \ref{genmodexists}, there is a generalized model $(\M, v, I_Y)$ of $\ph$. Any expression for the directional derivative will have to be linear in $\delta$, but this means that $P_{\N^\perp}v_\tau = 0$, and so the model is  regular. 
\end{proof} 
	
%\printbibliography
\bibliography{references}
\bibliographystyle{plain}

\end{document}